\newtheorem{theorem}{Theorem}
\newtheorem{lemma}[theorem]{Lemma}
\newtheorem{definition}{Definition}
\long\def\delete#1{}
\def\AA{{\cal A}}
\def\BB{{\cal B}}
\def\B{{\rm B}}
\def\a{\alpha}
\def\b{\beta}
\def\g{\gamma}
\def\t{\tau}
\title{{\bf Hadwiger's conjecture for  3-arc graphs\thanks{Research supported by ARC Discovery Project DP120101081.}}}
\author{David R. Wood$^a$\qquad Guangjun Xu$^b$ \qquad  Sanming Zhou$^b$  \\ \\
{\small 
 $^a$School of Mathematical Sciences} \\
 {\small   Monash University,  Melbourne, Australia}  \\
 {\small   {\texttt {david.wood@monash.edu} }} \smallskip \\ 
{\small
$^b$Department of Mathematics and Statistics}\\
{\small The University of Melbourne,  Parkville, VIC 3010,   Australia}\\
%{\small Parkville, VIC 3010, Australia}\\
{\small   {\texttt{\{gx,smzhou\}@ms.unimelb.edu.au}}}
}
\date{\today}
\begin{document}

\openup 0.5\jot\maketitle

\vspace{-1cm}

\smallskip
\begin{abstract}
  The 3-arc graph of a digraph $D$ is 
defined to have vertices the arcs of $D$ such that two arcs
$uv, xy$ are adjacent if and only if  $uv$ and $xy$
are distinct arcs of $D$ with $v\ne x$, $y\ne u$ and $u,x$   adjacent.
 We prove that   Hadwiger's conjecture holds for   3-arc graphs. 

 \medskip

{\it Keywords:}~ Hadwiger's conjecture,  graph colouring,  graph  minor,  $3$-arc graph 

{\it AMS subject classification (2000):}~   05C15, 05C20, 05C83
\end{abstract}

\section{Introduction}

A graph  $H$
is a {\em minor} of a graph  $G$ if  a graph isomorphic to  $H$ can be obtained from  a subgraph of   $G$ by contracting
edges. An {\em $H$-minor} is a minor isomorphic to $H$.
\delete{If $H$ is a complete graph, we also say that $G$ contains a clique minor of size   $|H|$.}  
The {\em Hadwiger number}    $h(G)$ of   $G$ is the  maximum integer $k$   such that
  $G$ contains   a   $K_k$-minor, where $K_k$ is the complete graph with $k$ vertices.

In 1943, Hadwiger  \cite{Had43} posed the following conjecture, which is thought to be one of the most 
difficult and beautiful problems in graph theory: 

\medskip

  {\bf Hadwiger's  Conjecture.}   For every graph   $G$,  $h(G)\ge \chi(G)$. 

\medskip

  Hadwiger's  conjecture  has been proved for  graphs $G$  with  $\chi(G)\le 6$ \cite{RST93}, and is open for graphs
with   $\chi(G)\ge 7$.  This conjecture also holds for particular classes of  graphs, including powers of cycles \cite{ll07}, 
 proper circular arc graphs \cite{bc09},
   line graphs \cite{rs04}, and  quasi-line graphs \cite{co08}.   See \cite{toft96} for a survey.

In this paper we prove   Hadwiger's  conjecture for  a large family of  graphs. Such graphs are defined by means of a graph operator, called the 3-arc graph construction (see Definition \ref{3arc}), which bears some similarities with the line graph operator.
 This construction was first introduced  by Li, Praeger and Zhou \cite{Li-Praeger-Zhou98} in the study of  a family of arc-transitive graphs whose automorphism group contains a subgroup acting imprimitively on the vertex set. (A graph is {\em arc-transitive} if its automorphism group is transitive on the set of oriented edges.) It was used in classifying or characterizing certain families of arc-transitive graphs \cite{Gardiner-Praeger-Zhou99,MPZ,Li-Praeger-Zhou98,LZ,Zhou00c, Zhou99,  Zhou98}.  
Recently, various graph-theoretic properties of 3-arc graphs have been   investigated \cite{bmg, KXZ, KZ, XZ}.

The original 3-arc graph construction \cite{Li-Praeger-Zhou98}   was defined for  a   finite, undirected  
and loopless graph $G = (V(G), E(G))$.  
In $G$,  an \emph{arc}   is an ordered pair of adjacent vertices. Denote by  $A(G)$  the set of arcs of $G$.
For adjacent vertices $u, v$ of $G$, we use $uv$ to denote the arc from $u$ to $v$, \delete{$vu$ ($\ne uv$) the arc
from $v$ to $u$,} and $\{u, v\}$ the edge between $u$ and $v$.  We emphasise that  each edge of $G$
gives rise to two arcs in $A(G)$.
A \emph{3-arc} of $G$ is a 4-tuple of vertices $(v, u, x, y)$, possibly with $v = y$, such that both $(v,u,x)$ and $(u,x,y)$ are paths of $G$.   
 The 3-arc graph of $G$ is defined as follows:

\begin{definition}\label{3arc}
{\em \cite{Li-Praeger-Zhou98, Zhou99}  
Let $G$ be an undirected graph. The 3-arc graph of $G$, denoted by $X(G)$, has
vertex set $A(G)$ such that two vertices corresponding to   arcs $uv$ and $xy$ are adjacent if and only if $(v,u,x,y)$ is a $3$-arc of $G$.
}
\end{definition}

\delete{Note that in Definition \ref{3arc}, the condition  that  $(v,u,x,y)$ is a $3$-arc of $G$    ensuring the adjacency of  distinct 
arcs   $uv$ and $xy$ in $X(G)$,   is equivalent to that $uv$ and $xy$
are distinct arcs of $G$ with $v\ne x$, $y\ne u$ and $u,x$   adjacent in $G$.
 From this observation,}

 The    3-arc graph construction can be  generalised  for a {\em digraph}  $D=(V(D), A(D))$  as follows \cite{KXZ},  
where  $A(D)$ is a multiset  of ordered pairs   (namely, arcs)  of distinct vertices of $V(D)$.  Here   a digraph allows parallel arcs but not loops.

\begin{definition}\label{3arc-gen} 
{\em 
Let   $D=(V(D), A(D))$   be a   digraph. The 3-arc graph of $D$, denoted by $X(D)$, has
vertex set $A(D)$ such that two vertices corresponding to     arcs $uv$ and $xy$ are adjacent if and only if 
  $v\ne x$, $y\ne u$ and $u,x$   are adjacent.
}
\end{definition}

Let $D$ be the digraph obtained from an undirected graph $G$  by replacing each edge $\{x,y\}$ by 
two opposite arcs $xy$ and $yx$. Then, 
$X(D)=X(G)$.

   Knor,  Xu and Zhou \cite{KXZ} introduced the notion of 
   {\em $3$-arc colouring}  a digraph,  which can be defined as  a proper vertex-colouring of $X(D)$.  
The minimum  number of colours  in a  $3$-arc colouring of $D$ is called the {\em $3$-arc chromatic index}
 of $D$, and is   denoted  by $\chi_{3}'(D)$.  Then  $\chi(X(D)) = \chi_{3}'(D)$.

 The main result of this paper is the following:

%\begin{theorem}
%\label{th1}
%Let $G$ be an undirected   graph.  Then  $h(X(G))\geq   \chi  (X(G))$.  
%\end{theorem}

% \begin{theorem}
% \label{th2}
% Let $G$ be an undirected   graph.  Then  $h(P_3(G))\geq   \chi(P_3(G))$.  
 % \end{theorem}
      
\begin{theorem}
\label{th3}
Let $D$ be a digraph   without    loops.  Then  $h(X(D))\geq   \chi (X(D))$.  
\end{theorem}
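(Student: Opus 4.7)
\textbf{Proof plan for Theorem~\ref{th3}.}

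Let $G$ denote the underlying simple graph of $D$, so that $\{u,x\}\in E(G)$ iff at least one of $ux,xu$ belongs to $A(D)$. My strategy combines a colouring bound with an explicit clique minor in $X(D)$.

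\emph{Structural picture.} For each edge $\{u,x\}$ of $G$, the arc sets $\{uv\in A(D): v\ne x\}$ and $\{xy\in A(D): y\ne u\}$ form the two sides of a complete bipartite subgraph of $X(D)$, and every edge of $X(D)$ lies in exactly one such ``bundle''. Consequently,
\[
\omega(X(D)) \;=\; \max\bigl\{|K|: K\subseteq V(G)\text{ a clique in }G \text{ such that each } u\in K \text{ has an out-neighbour in }V(D)\setminus K\bigr\},
\]
witnessed by arcs $u_iv_i$ with $u_i\in K$ and $v_i\notin K$, which form a $K_{|K|}$ subgraph of $X(D)$.

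\emph{Upper bound on $\chi(X(D))$.} Any proper colouring $c$ of $G$ yields a proper colouring of $X(D)$ via $uv\mapsto c(u)$: the adjacency $uv\sim xy$ in $X(D)$ forces $u\sim x$ in $G$, so $c(u)\ne c(x)$. Hence $\chi(X(D))\le\chi(G)$. The bound may be strict, and since Hadwiger's conjecture for $G$ is open in general, a direct reduction to Hadwiger for $G$ is not viable.

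\emph{Clique-minor construction.} The crux is to produce a $K_k$-minor of $X(D)$ for $k=\chi(X(D))$. Note that $\chi(X(D))$ can strictly exceed $\omega(X(D))$, so a $K_{\omega(X(D))}$ subgraph alone does not suffice; explicit examples arise when $D$ is so arranged that $X(D)$ contains an induced $5$-cycle (e.g.\ as an induced subgraph of $X(D)$ for a suitably oriented wheel). The plan is to take branch sets $B_1,\ldots,B_k\subseteq A(D)$ such that (i) the $B_i$'s are pairwise disjoint, (ii) each $B_i$ induces a connected subgraph of $X(D)$, and (iii) every pair $B_i,B_j$ is joined by an edge of $X(D)$. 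The bundles from the structural picture supply many of the required inter-branch edges.

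\emph{Main obstacle.} The principal difficulty is making each $B_i$ connected in $X(D)$. The natural candidate ``all arcs of $D$ incident to a fixed vertex of $D$'' fails because in-arcs and out-arcs at a common vertex are non-adjacent in $X(D)$ (the condition $y\ne u$ of Definition~\ref{3arc-gen} fails for arcs $uv$ and $vy$). Bridging arcs from neighbouring vertices in $G$ must be inserted, carefully chosen to preserve disjointness across different $B_i$'s. I expect the argument to proceed by induction on $|V(D)|$ (or on $|A(D)|$), removing a suitable low-degree vertex at each step and exploiting the bundle decomposition to carry the colouring bound and the minor construction forward simultaneously.
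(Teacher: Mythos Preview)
Your structural observations and the colouring bound $\chi(X(D))\le\chi(G)$ are correct and match the paper's Claim~1 (the paper sharpens this slightly by first deleting \emph{redundant} arcs --- those $uv$ for which every out-arc at $u$ or at $v$ lies between $u$ and $v$ --- and works with the underlying graph $G'$ of the resulting digraph). You have also correctly identified the central difficulty: one must build $k=\chi(X(D))$ branch sets that are simultaneously disjoint, internally connected in $X(D)$, and pairwise joined.

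The genuine gap is that you have not actually constructed the $K_k$-minor. Your final paragraph offers only the sentence ``I expect the argument to proceed by induction on $|V(D)|$ \dots\ removing a suitable low-degree vertex'', with no indication of what invariant is maintained, why $\chi(X(D))$ drops by at most one, or how a minor in the smaller instance extends. There is no reason to think this works: deleting a vertex of $D$ can remove many arcs of $X(D)$ and change both $\chi$ and $h$ unpredictably, and the bundle decomposition does not by itself supply the bridging arcs you acknowledge are needed. The proposal stops precisely at the hard step.

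The paper's route is quite different and entirely constructive. It passes to a $k$-critical subgraph $H$ of $G'$ (so $\delta(H)\ge k-1$), chooses an orientation $F$ of $H$ minimising the number of out-degree~$1$ vertices, and uses an averaging identity (Lemma~\ref{le6}) to locate an arc $uv\in A(F)$ with $d_F^+(u)+d_F^+(v)-1\ge k-1$. The branch sets are then built from the out-arcs at $u$ and at $v$: each branch set contains one such out-arc together with a carefully chosen \emph{feasible} or \emph{compatible} auxiliary arc (Lemma~\ref{le2}) that makes it adjacent to all other branch sets. When $d_F^+(u)+d_F^+(v)-1=k-1$ an extra branch set is manufactured from a short path in $H-\{u,v\}$. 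The whole argument is a delicate case analysis on the pair $(d_F^+(u),d_F^+(v))$; none of this machinery appears in your plan.
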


    Note that in   the case of  the $3$-arc graph of 
			an undirected graph,    we have obtained  a much    simpler proof of Theorem \ref{th3}.

\delete{ 
\begin{remark}
\label{rem:conca}
{\em 
 (1) In the proof of Theorem \ref{th3}, we   assume that, for every pair of distinct  $u$ and $v$ of $D$,  there is at most one arc is outgoing from 
one of  them  to the other. That is,    $A_D\{u,v\} \subseteq \{uv, vu\}$.  Since all the  arcs
   from $u$ to $v$  can be assigned the same colour and deleting arc does not increase    $h(X(D))$. 
		
	(2) Theorem \ref{th3} holds for the $3$-arc graphs of both  an undirected graph (viewed as a bi-oriented graph) and  an oriented graph. 
	For the case of  the $3$-arc graph of   an undirected graph, a much simple proof of Theorem \ref{th3} exists.  
}
\end{remark}
 }

\section{Preliminaries}

		We need the following  notation.    Let  $D=(V(D), A(D))$  be a digraph.  
We denote  by $A_D\{x,y\}$  the set of arcs between  vertices $x$ and $y$,  and     by $A_D(x)$   the set of arcs outgoing from $x$.
Then  vertices $x$ and $y$ are adjacent if and only if  $A_D\{x,y\}\ne \emptyset$.  
When  $|A_D\{x,y\}|= 1$, we   misuse the notation $A_D\{x,y\}$ to indicate the arc between $x$ and $y$.
 An {\em in-neighbour} 
 (respectively,  {\em out-neighbour}) of a vertex $x$ of $D$ is a vertex $y$ such that $yx\in A(D)$   (respectively,   $xy\in A(D)$).
The set of all in-neighbours  (respectively,  out-neighbours) of $x$  is denoted by $N_D^-(x)$  (respectively,  $N_D^+(x)$).
The {\em in-degree}  $d_D^-(x)$ (respectively, {\em out-degree} $d_D^+(x)$)   is defined to be the number of  in-neighbours  (respectively,  out-neighbours) of $x$. 
A vertex $x$ is called a {\em sink}  if  $d_D^+(x)=0$.
A digraph is  {\em simple} if  $|A_D\{x,y\}|\le 1$ for all distinct vertices $x$ and $y$ of $D$. 
A {\em tournament} is a simple digraph  whose underlying undirected graph is  complete.

For an undirected graph $G$, 
 the degree of a vertex $v$ in   $G$ is denoted by $d_G(v)$, and the minimum degree of $G$ is denoted by $\d(G)$.   
   We  omit the subscript when there is no ambiguity.

		A $K_t$-minor  in $G$  can be thought of as $t$ connected subgraphs in $G$ that  are
		pairwise disjoint  such that there is at least one edge of $G$ between each pair of subgraphs.
	  Each such subgraph is called a {\em branch set}. 

\begin{lemma}
\label{le1}
Let $D$ be a tournament on   $n\ge 5$ vertices. Then   $h(X(D))\ge n$.
\end{lemma}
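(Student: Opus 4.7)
The plan is to exhibit a $K_n$-minor of $X(D)$ by specifying a branch set $B_v$ for each $v\in V(D)$. I begin with the natural partition $B_v=\{uv:u\in N_D^-(v)\}$, the set of in-arcs of $v$. First observe that each such $B_v$ is a clique in $X(D)$: two in-arcs $u_1v$ and $u_2v$ have distinct tails $u_1,u_2$ that are adjacent in the tournament $D$, and since $v\ne u_1,u_2$, the adjacency conditions of Definition~\ref{3arc-gen} are all met. Thus every nonempty $B_v$ is connected.

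The main task is then to verify, for every pair $v\ne w$, that there is an edge of $X(D)$ between $B_v$ and $B_w$. An arc $av\in B_v$ and an arc $bw\in B_w$ are adjacent in $X(D)$ exactly when $a\in N_D^-(v)\setminus\{w\}$, $b\in N_D^-(w)\setminus\{v\}$, and $a\ne b$. A short case analysis isolates the obstructions: (a) $N_D^-(v)\subseteq\{w\}$ (or the symmetric condition with $v,w$ swapped), which covers the case when $D$ has a source; or (b) $v\to w$, $d_D^-(v)=1$, $d_D^-(w)=2$, and $N_D^-(v)=N_D^-(w)\setminus\{v\}=\{x\}$ for some third vertex $x$ (or the swapped case).

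These obstructions are repaired by transferring a carefully chosen arc into $B_v$. For case~(b), pick $y,z\in V(D)\setminus\{v,w,x\}$ (at least two such vertices exist since $n\ge 5$) and let the arc of $D$ between $y$ and $z$ be $yz$, say; this arc currently lies in $B_z$. Because $N_D^-(v)=\{x\}$ forces $v\to y$ and $v\to z$, we have $\{v,y\}\subseteq N_D^-(z)$, so $d_D^-(z)\ge 2$ and $B_z\setminus\{yz\}$ remains nonempty. The new $B_v=\{xv,yz\}$ is connected in $X(D)$ because the tails $x,y$ are distinct and adjacent in $D$, and $v\ne y$, $z\ne x$. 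Moreover, $yz$ and $vw$ are adjacent in $X(D)$ (the tails $y$ and $v$ are distinct, $z\ne v$, $w\ne y$), providing the missing edge from $B_v$ to $B_w$. Case~(a), and in particular the case of a source $v_0$, is handled analogously: one transfers an out-arc $v_0u$ (with $u$ chosen so that $d_D^-(u)\ge 2$, possible when $n\ge 5$) and an auxiliary arc $yz$ into $B_{v_0}$.

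The main obstacle is not any single one of the above steps but their global orchestration: several bad pairs of type~(b) may coexist with a source of $D$, and each repair must be performed without emptying any branch set, disconnecting it, or creating a new bad pair with some third branch set. This requires a careful iterative (or unified) bookkeeping argument, and the hypothesis $n\ge 5$ is used throughout to ensure that enough auxiliary vertices and arcs are always available to carry out every required transfer.
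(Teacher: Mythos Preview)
Your approach is quite different from the paper's and, as you yourself flag in the final paragraph, it is incomplete. You set up the natural partition by in-arcs, correctly identify the two types of obstructions, and sketch a local repair for each; but you then defer the entire heart of the argument to an unwritten ``careful iterative (or unified) bookkeeping argument.'' That is a genuine gap, not a formality. Already in the transitive tournament on five vertices one has simultaneously a source and a type-(b) bad pair, and the repairs interact: transferring an arc out of some $B_z$ can destroy the unique $X(D)$-edge that was witnessing adjacency between $B_z$ and a third branch set, or leave some $B_z$ a singleton that then participates in a new obstruction of type (a) or (b). Your treatment of case~(a) is also only loosely described (you name neither the auxiliary arc $yz$ nor verify that the resulting $B_{v_0}$ is connected and adjacent to every other branch set). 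Without the global argument actually carried out, what you have is a plan rather than a proof.

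By contrast, the paper avoids all repairs with a short explicit construction: fix one vertex $x$, label the remaining vertices $v_0,\dots,v_{n-2}$, and take the $n$ branch sets
\[
B_i=\{A\{x,v_i\},\,A\{v_{i+1},v_{i+2}\}\}\quad(0\le i\le n-2),\qquad
U=\{A\{v_i,v_{i+2}\}:0\le i\le n-2\},
\]
indices modulo $n-1$. Each $B_i$ is a single edge of $X(D)$, $U$ induces an $(n-1)$-cycle, and pairwise adjacency is checked directly in a few lines. If you want to salvage your in-arc approach you would need to actually execute the bookkeeping you describe; the paper's construction is both shorter and sidesteps the issue entirely.
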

    
\begin{proof}   Since $D$ is a tournament,  $A\{x,y\}$ is  interpreted as  a single arc.  
 Denote $V(D)=\{x, v_0, v_1, \ldots, v_{n-2}\}$. 
 We now  construct a collection of  $n$
branch sets.  
For  $0\le i \le n-2$, let $B_i:= \{A\{x, v_i\},  A\{v_{i+1}, v_{i+2}\}\}$.  Let  
  $U := \{A\{v_{i}, v_{i+2}\} \mid  0 \le i \le n-2\}$, where all subscripts are taken modulo $n-1$. 
	Clearly, these branch sets are pairwise disjoint.
  
   Now we show that each branch set is connected.  Note that each $B_i$   induces      $K_2$ in $X(D)$. 
	Since  $A\{v_{i}, v_{i+2}\}$ is adjacent to   $A\{v_{i+1}, v_{i+3}\}$ in $X(D)$,
   $U$ induces   a subgraph that 
  contains an $(n-1)$-cycle passing through each element of  $U$.

	Next we show that these branch sets are pairwise adjacent. 
  For each pair of distinct $B_i, B_{j}$, if $j\ne i+1$ and  $j\ne i+2$,  then
   $B_i$ and $B_{j}$ are adjacent  since  $A\{v_{i+1}, v_{i+2}\}$ is adjacent to $A\{v_0, v_{j}\}$.  If $j=i+1$, then 
     $i\ne j+1$ and  $i\ne j+2$  because $n-1\ge 4$, so  $A\{v_0, v_{i}\}$  is adjacent to $A\{v_{j+1}, v_{j+2}\}$.
    If  $j=i+2$,    then $A\{v_{j+1}, v_{j+2}\}$ is adjacent to   $A\{v_{i+1}, v_{i+2}\}$   since 
      $\{v_{j+1}, v_{j+2}\} \cap \{v_{i+1}, v_{i+2}\}=\emptyset$.
  Thus, $B_i$ is adjacent to $B_{j}$ as well.
  Since $A\{v_0, v_i\} \in B_i$ is adjacent to $A\{v_{i+1}, v_{i+3}\} \in U$, each $B_i$ is adjacent to $U$.   
\end{proof}

 Let $v$ be a vertex of a  digraph $D$.    Let $A\subseteq A(v)$.  
 An arc $xy$  is said to be   {\em $A$-feasible}  if $vx\in A$, $y\ne v$ and  $(v,x,y)$ is a directed path. 
  A set  $A^{f}\subseteq A(D)$   is  {\em $A$-feasible} if each arc in  $A^{f}$   is  $A$-feasible and no two arcs in   $A^{f}$ 
	share a tail.
 An arc $xy$ of $D$ is  said to be   {\em  $A$-compatible}  if  $y\ne v$,   $A(v,x)\ne \emptyset$  and  $vx\notin A$. 
 A set  $A^{c}\subseteq A(D)$ is   {\em  $A$-compatible} if each arc in  $A^{c}$  is $A$-compatible.
 Note that  each feasible arc $xy$ is adjacent in $X(D)$ to each arc in  $A$  except  $vx$, and each compatible  arc $xy$ is adjacent to 
each  arc  in $A$. 
 
\delete{
 maximal set of  feasible arcs   such
  that   no two arcs of   $A^{f}$ share the same tail. That is, for each $vx\in A$, 
 if  $A(x)-\{xv\}\ne \emptyset$, then 
 $A^{f}$ contains  exactly one arc of $A(x)-\{xv\}$.  Clearly, $0\le |A^{f}|\le |A|\le |A(v)|$.
 }

Let  $A^{f}$   be an  $A$-feasible  set,  and   $A^{c}$ be  an  $A$-compatible set. 
An {\em $(A,A^{f},A^{c})$-net} of {\em size $p$}  is a $K_p$-minor in $X(D)$ using only arcs in $A\cup A^{f} \cup A^{c}$ such
  that $p:=|A|$  and each branch set has exactly one arc in $A$.
	An  $(A,A^{f},A^{c})$-net is  called a {\em net at $v$} if  $v$ is the common tail of all arcs in $A$.  
	It may happen that one of $A^{f}$ and $A^{c}$ is empty. The following lemma
 provides some sufficient  conditions for the existence of an $(A,A^{f},A^{c})$-net.

\begin{lemma}
\label{le2}
Let  $v$ be a vertex of a  digraph $D$.   Let $A\subseteq A(v)$ and  $p:=|A|$.  Let 
 $A^{f}$   be an $A$-feasible set. Let    $A^{c}$ be  an     $A$-compatible set. 
%  Let $p:=|A^{f}|$ and $q:=|A^{c}|$. 
Then,  in the following cases,  $D$ contains an $(A,A^{f},A^{c})$-net. 
     \begin{itemize}
    \item[\rm (1)]   $p=1$;
    \item[\rm (2)]     $|A^{c}|\ge 1$  and $p=2$;
      \item[\rm (3)]    $|A^{f}|= 3$ and $p=3$;
     \item[\rm (4)]  $|A^{f}|\ge 1$ and $|A^{c}|\ge1$ and  $p=3$;
       \item[\rm (5)] $|A^{c}|\ge2$ and  $p=3$;
         \item[\rm (6)]  $|A^{f}|+|A^{c}|\ge p-1$ and  $p\ge 4$.  
     \end{itemize}
\end{lemma}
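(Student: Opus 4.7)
The plan is to construct the $p$ required branch sets explicitly in each of the six cases, letting $B_i$ contain the arc $vx_i\in A$ (where $A=\{vx_1,\dots,vx_p\}$) together with zero or more auxiliary arcs chosen from $A^f\cup A^c$. Two facts drive every construction: a compatible arc placed in any branch set is adjacent in $X(D)$ to every arc of $A$, whereas a feasible arc $f_k$ extending the parent $vx_k\in A$ is adjacent to every arc of $A$ except $vx_k$ itself. Thus, a compatible arc in $B_j$ supplies cross-edges from $B_j$ to every other branch set, while a feasible $f_k$ in $B_j$ (necessarily $j\neq k$ for $B_j$ to be connected) supplies cross-edges to every branch set except $B_k$.

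Cases (1)--(5) all have $p\le 3$ and are handled by short direct constructions. Case (1) is immediate. For (2), take $B_1=\{vx_1,c_1\}$ and $B_2=\{vx_2\}$ with $c_1\in A^c$. For (3), assign the three feasibles cyclically to the three branch sets so that each feasible avoids its own home. Case (4) is the most delicate small case: the trick I intend to use is to place the single compatible into the branch set $B_k$ matching the home of the chosen feasible $f_k$ and then drop $f_k$ into one of the other two branch sets; this particular placement forces the third branch set's pair to be covered by $f_k$, because $f_k$ is adjacent to $vx_l$ whenever $l\neq k$. Case (5) puts the two compatibles into two different branch sets. In each of these, connectivity and pairwise adjacency are then routine to verify.

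The main case is (6), where $p\ge 4$ and $|A^f|+|A^c|\ge p-1$. Here the plan begins with two reductions: discard extras so that $|A^f|+|A^c|=p-1$; and, since feasible arcs have pairwise distinct tails and hence pairwise distinct homes, at most $p-1$ of the indices $\{1,\dots,p\}$ appear as homes, so after relabeling I may assume that no feasible has home $p$ (if a priori $|A^f|=p$, first discard the feasible with home $p$). Now set $B_p=\{vx_p\}$ and distribute the $p-1$ auxiliary arcs one per branch set into $B_1,\dots,B_{p-1}$: place each feasible $f_k$ into $B_{\sigma(k)}$, where $\sigma$ is the cyclic shift on $\{1,\dots,p-1\}$ (namely $\sigma(k)=k+1$ with $\sigma(p-1)=1$), and fill the remaining slots with the compatibles in any order. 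Connectivity of each $B_i$ is immediate from the driving facts above; pairs $(a,p)$ are covered because the auxiliary arc in $B_a$ is either compatible or a feasible whose home lies in $\{1,\dots,p-1\}$; and pairs $(a,b)$ with $a,b<p$ are covered unless both $B_a,B_b$ carry feasibles forming a ``swap'' $\sigma(a)=b,\sigma(b)=a$, which a cyclic shift excludes as long as $p-1\ge 3$. The single point where the proof really needs the hypothesis $p\ge 4$ is exactly this swap-avoidance; verifying the wrap-around pair $(1,p-1)$ reduces to the inequality $1\ne p-2$, which again requires $p\ge 4$. This is the main conceptual obstacle; the remainder of the verification is a collection of routine case checks.
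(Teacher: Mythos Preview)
Your proposal is correct and follows essentially the same approach as the paper: the small cases (1)--(5) are handled by the same direct constructions (in particular your placement in case~(4) coincides with the paper's), and in case~(6) both proofs leave one branch set as a singleton and distribute the $p-1$ auxiliary arcs among the remaining branch sets via a cyclic shift so that no feasible sits in its own home and no swap can occur. The only cosmetic differences are that the paper relabels so the feasible homes are an initial segment $\{0,\dots,|A^f|-1\}$ rather than an arbitrary subset of $\{1,\dots,p-1\}$, and it uses the inverse cyclic shift $k\mapsto k-1$ rather than $k\mapsto k+1$.
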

  
\begin{proof} 
Denote $A =\{vv_0, vv_1, \ldots, vv_{p-1}\}$,   and without loss of generality, 
 assume that $A(v_{j})-\{v_jv\}\ne \emptyset$ for  $0\le j \le |A^{f}|-1$.
Denote the elements of $A^{f}$  by  $v_0v_0', v_1v_1', \ldots, v_{|A^{f}|-1}v_{|A^{f}|-1}'$.
 Note that $(v, v_j, v_j')$ is a directed 
    path for   $0\le j \le |A^{f}|-1$. Consider the following possibilities:
    
\smallskip

    (1) $p=1$: Then $\{vv_0\}$ is a trivial $(A, \emptyset, \emptyset)$-net  of   size  $1$.

   (2) $|A^{c}|\ge 1$  and $p=2$:  Let   $ww'$ be an    $A$-compatible arc and 
    $A^{c}:=\{ww'\}$.  Since  $ww'$ is adjacent to each arc of 
  $A$,    $\{vv_0\}$,  $\{vv_1, ww'\}$ is  an $(A, \emptyset, A^{c})$-net   of   size  $2$.  
 
   (3) $|A^{f}|= 3$ and $p=3$:  Then $\{vv_0, v_1v_1'\}$, $\{vv_1, v_2v_2'\}$ and 
  $\{vv_2, v_0v_0'\}$ form   an $(A, A^{f}, \emptyset)$-net  of size $3$.
   
   (4)  $|A^{f}|\ge 1$ and $|A^{c}|\ge1$ and  $p=3$:  Let   $ww'$ be an    $A$-compatible arc and 
    $A^{c}:=\{ww'\}$. Note that $ww'$ is adjacent to 
  each $vv_i$,  and $v_0v_0'$ is adjacent to  $vv_2$ in $X(D)$. So    $\{vv_0, ww'\}$, $\{vv_1, v_0v_0'\}$ and  $\{vv_2\}$ 
   form  an $(A,A^{f},A^{c})$-net  of size $3$. 
 
   (5)  $|A^{c}|\ge2$  and  $p=3$: Similar to case (4), $\{vv_0, ww'\}$, $\{vv_1, yy'\}$ and  $\{vv_2\}$ form  
    an $(A,A^{f},A^{c})$-net   of size $3$, where 
    $A^c$  contains two   $A$-compatible arcs  $yy'$ and $ww'$.
      
   (6)  $|A^{f}|+|A^{c}|\ge p-1$ and  $p\ge 4$:    Let $\b_j:= v_jv_j'$ for 
    $0 \le j \le |A^{f}|-1$.  Since   $|A^{c}|\ge p-1-|A^{f}|$,  we can choose   $p-1-|A^{f}|$ arcs from  $A^{c}$ and  name them 
    as $\b_{|A^{f}|}$,   $\b_{|A^{f}|+1}$,  $\ldots$,  $\b_{p-2}$. Define $B_j:= \{vv_{j}, \b_{j+1}\}$
    for $0\le j \le p-3$,  $B_{p-2}:= \{vv_{p-2}, \b_{0}\}$, and   $B_{p-1}:= \{vv_{p-1}\}$. 
    For   $0\le i< j \le p-2$,   observe that in $X(D)$, 
	  $vv_{j}\in B_{j}$  is adjacent to $\a_{i}$ 	if $i\ne j-1$; and   
     $vv_{i}\in B_{i}$  is adjacent to $\a_{j}$ if $i= j-1$,  where $\a_{j}\in B_j-\{vv_j\}$ and $\a_{i}\in B_{i}-\{vv_{i}\}$.
		Thus,   $B_{j}$ and  $B_{i}$ are adjacent. 
     In addition,  since $vv_{p-1}\in B_{p-1}$ is adjacent  in $X(D)$ to every $\b_{j}$,   $B_{p-1}$ is adjacent to   $B_j$ with $j\le p-2$.  Thus, 
      $B_0, \ldots, B_{p-1}$ form an $(A,A^{f},A^{c})$-net   of size $p$.    
\end{proof}

    Note that if $D$ contains an $(A,A^{f},A^{c})$-net,  then   $X(D)$ contains a $K_p$-minor  and $h(X(D))\ge p$. 
    
   A graph $G$ with chromatic number $k$ is called   {\em $k$-critical} if $\chi(H)<\chi(G)$ for every 
   proper subgraph $H$ of $G$. The following result is well known:
\begin{lemma}
\label{le7}
 Let $G$ be a   $k$-critical graph. Then
 \begin{itemize}
    \item[\rm (a)]$G$    has minimum degree   at least $k-1$,  when $k\ge 2$ {\em \cite{Dir53}};
    \item[\rm (b)]  no vertex-cut of  $G$  induces a clique when $k\ge 3$ and $G$ is noncomplete {\em  \cite{Dir52}}.
      \end{itemize}
 \end{lemma}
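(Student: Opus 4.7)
The plan is to prove both parts by contradiction, exploiting the defining property of $k$-criticality: every proper subgraph of $G$ admits a proper $(k-1)$-colouring.

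For part (a), I would suppose some vertex $v$ has $d_G(v)\le k-2$. Applying criticality to $G-v$ yields a proper $(k-1)$-colouring $c$. Since the neighbours of $v$ occupy at most $k-2$ of the $k-1$ available colours, some colour is unused on $N_G(v)$; assigning it to $v$ extends $c$ to a proper $(k-1)$-colouring of $G$, contradicting $\chi(G)=k$.

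For part (b), I would suppose $S\subseteq V(G)$ is a vertex-cut such that $G[S]$ is a clique. First I would observe that $G$ is connected: otherwise some connected component would itself have chromatic number $k$ and be a proper subgraph, violating criticality. Hence $G-S$ has components $C_1,\ldots,C_r$ with $r\ge 2$, and for each $i$ the induced subgraph $G_i:=G[S\cup V(C_i)]$ is a proper subgraph of $G$ (it misses $\bigcup_{j\ne i}V(C_j)\ne\emptyset$), so by criticality admits a proper $(k-1)$-colouring $c_i$. Since $G[S]$ is a clique, each $c_i$ assigns $|S|$ pairwise distinct colours to $S$, which in particular forces $|S|\le k-1$. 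By permuting colour names within each $c_i$ I can arrange that all the $c_i$'s agree on $S$. Pasting the $c_i$'s then produces a proper $(k-1)$-colouring of $G$, again contradicting $\chi(G)=k$.

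Neither part presents a real obstacle: both are the classical Dirac arguments. The only subtlety in (b) is the colour-permutation step, which works precisely because $S$ is a clique, so its vertices are forced into pairwise distinct colour classes in every $c_i$; any bijection between the restrictions $c_i|_S$ therefore extends to a permutation of the $k-1$ colour names used by $c_i$. Care must also be taken to note that $r\ge 2$ really does follow from $S$ being a vertex-cut, so that each $G_i$ is strictly smaller than $G$ and criticality can be applied.
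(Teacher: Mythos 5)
Your proof is correct: the paper does not prove this lemma at all but simply cites Dirac's original papers, and the arguments you give (extending a $(k-1)$-colouring of $G-v$ over a low-degree vertex for (a), and pasting $(k-1)$-colourings of the pieces $G[S\cup V(C_i)]$ after permuting colours on the clique cut $S$ for (b)) are exactly those classical proofs. The one subtle step in (b), that the colourings can be made to agree on $S$ because the clique forces pairwise distinct colours there, is handled correctly.
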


  \delete{
  We next  describe a method to construct a  $K_s$-minor in $X(D)$ using arcs of  $A$, $A^{f}$ and/or $A^{c}$,
	where  $s:=|A|$.  This  $K_s$-minor is represented by a 
collection of $s$ branch sets,  called a {\em $v$-net} of {\em size} $s$,   and denoted by $\BB(v, A)$.
Let $p:=|A^{f}|$ and $q:=|A^{c}|$. Then $p\le s$. Say
    $A =\{vv_0, vv_1, \ldots, vv_{s-1}\}$,   and without loss of generality,  assume that $A(v_{j})-\{v_jv\}\ne \emptyset$ for  $0\le j \le p-1$.
 By the maximality of $A^f$,     $A(v_{j})-\{v_jv\}= \emptyset$   for     $p \le j \le s-1$.  
Denote the elements of $A^{f}$  by  $v_0v_0', v_1v_1', \ldots, v_{p-1}v_{p-1}'$. Note that $(v, v_j, v_j')$ is a directed 
    path for   $0\le j \le p-1$. Consider the following possibilities:

    (1) $s=1$: Then $\{vv_0\}$ is the trivial $v$-net  of   size  $1$.

   (2) $q\ge 1$  and $s=2$:  Let   $ww'$ be an element of $A^{c}$.  Since  $ww'$ is adjacent to each arc of 
  $A$,    $\{vv_0\}$,  $\{vv_1, ww'\}$ is  a  $v$-net  of   size  $2$.  
 
   (3) $p= 3$ and $s=3$:  Then $\{vv_0, v_1v_1'\}$, $\{vv_1, v_2v_2'\}$ and 
  $\{vv_2, v_0v_0'\}$ form a  $v$-net of size $3$.
   
   (4)  $p\ge 1$ and $q\ge1$ and  $s=3$:   Then $\{vv_0, ww'\}$, $\{vv_1, v_0v_0'\}$ and  $\{vv_2\}$  form a  
	$v$-net of size $3$, by noting that $ww' \in
	A^c$ is adjacent to 
  each $vv_i$,  and $v_0v_0'$ is adjacent to  $vv_2$ in $X(D)$. 
 
   (5)  $q\ge2$  and  $s=3$: Similarly to case (4), $\{vv_0, ww'\}$, $\{vv_1, yy'\}$ and  $\{vv_2\}$ form a   $v$-net of size $3$, where 
 $yy',ww'\in A^c$ are distinct.  
      
   (6)  $p+q\ge s-1$ and  $s\ge 4$:    Let $\b_j:= v_jv_j'$ for 
    $0 \le j \le p-1$.  Since   $q\ge s-1-p$,  we can choose   $s-1-p$ arcs from  $A^{c}$ and  name them 
    as $\b_{p}$,   $\b_{p+1}$,  $\ldots$,  $\b_{s-2}$. Define $B_j:= \{vv_{j}, \b_{j+1}\}$
    for $0\le j \le s-3$,  $B_{s-2}:= \{vv_{s-2}, \b_{0}\}$, and   $B_{s-1}:= \{vv_{s-1}\}$. 
    For   $0\le i< j \le s-2$,   observe that in $X(D)$, 
	  $vv_{j}$  is adjacent to $\a_{i}$ 	if $i\ne j-1$; and   
     $vv_{i}$  is adjacent to $\a_{j}$ if $i= j-1$,  where $\a_{j}\in B_j-\{vv_j\}$ and $\a_{i}\in B_{i}-\{vv_{i}\}$.
		Thus,   $B_{j}$ and  $B_{i}$ are adjacent. 
     In addition,  since $vv_{s-1}$ is adjacent  in $X(D)$ to every $\b_{j}$,   $B_{s-1}$ is adjacent to   $B_j$ with $j\le s-2$.  Thus, 
      $B_0, \ldots, B_{s-1}$ form a  $v$-net  of size $s$. 
    
    To summarize, we have the following lemma:

\begin{lemma}
\label{le2}
Let  $A$, $A^{f}$, $A^{c}$,  $s$, $p$ and $q$ be as above. A  $v$-net  of size $s$ can be constructed using arcs in 
 $A\cup A^{f} \cup A^{c}$ in the following cases:
    \begin{itemize}
    \item[\rm (1)]   $s=1$;
    \item[\rm (2)]     $q\ge 1$  and $s=2$;
      \item[\rm (3)]    $p= 3$ and $s=3$;
     \item[\rm (4)]  $p\ge 1$ and $q\ge1$ and  $s=3$;
       \item[\rm (5)] $q\ge2$ and  $s=3$;
         \item[\rm (6)]  $p+q\ge s-1$ and  $s\ge 4$.  
     \end{itemize}
		In addition, each branch set of the  $v$-net contains exactly one outgoing arc in $A$. 
\end{lemma}
    
    Note that, if $D$ contains a $v$-net of size $s$, then   $X(D)$ contains a $K_s$-minor  and $h(X(D))\ge s$. 
    
   A graph $G$ with chromatic number $k$ is called   {\em $k$-critical} if $\chi(H)<\chi(G)$ for every 
   proper subgraph $H$ of $G$. The following result is well known:
\begin{lemma}
\label{le7}
 Let $G$ be a   $k$-critical graph. Then,
 \begin{itemize}
    \item[\rm (a)]$G$    has minimum degree   at least $k-1$,  when $k\ge 2$ {\em [\cite{Dir53}]};
    \item[\rm (b)]  no vertex-cut of  $G$  induces a clique when $k\ge 3$ and $G$ is noncomplete {\em  [\cite{Dir52}]}.
      \end{itemize}
 \end{lemma}
  }

    Let $D$ be a  simple   digraph.  For each arc $uv \in A(D)$, define  $S_{D}(uv):=  d^+(u)+ d^+(v)-1$.

\begin{lemma}
\label{le6}     
  For a simple digraph  $D$, 
 \begin{eqnarray*}
   \sum_{uv\in A(D)} S_D(uv) &=&\sum_{v\in V(D)}d^{+}(v)(d(v)-1), 
     \end{eqnarray*}
   where $d(v)=d^{+}(v)+d^{-}(v)$.
\end{lemma}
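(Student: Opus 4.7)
The plan is to prove this by a direct double-counting argument, simply expanding $S_D(uv)$ linearly and re-indexing the resulting sums over vertices rather than over arcs. Concretely, I would write
\[
\sum_{uv\in A(D)} S_D(uv) \;=\; \sum_{uv\in A(D)} d^+(u) \;+\; \sum_{uv\in A(D)} d^+(v) \;-\; |A(D)|,
\]
so the task reduces to evaluating each of the three pieces on the right.

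For the first piece, I would re-index by the tail: each vertex $u$ is the tail of exactly $d^+(u)$ arcs, so the contribution of $u$ is $d^+(u)\cdot d^+(u)$, giving $\sum_{u\in V(D)}(d^+(u))^{2}$. For the second piece, I would re-index by the head: each vertex $v$ is the head of exactly $d^-(v)$ arcs, so the contribution of $v$ is $d^-(v)\cdot d^+(v)$, giving $\sum_{v\in V(D)} d^-(v)\,d^+(v)$. The third piece is $|A(D)|=\sum_{v\in V(D)} d^+(v)$, by the standard fact that the total number of arcs equals the sum of the out-degrees. The simplicity of $D$ is what legitimises the re-indexing, since $A_D\{x,y\}$ consists of at most one arc, so no multiplicities need to be tracked.

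Combining the three evaluations and factoring $d^+(v)$ out of each term gives
\[
\sum_{v\in V(D)} d^+(v)\bigl(d^+(v) + d^-(v) - 1\bigr) \;=\; \sum_{v\in V(D)} d^+(v)\bigl(d(v)-1\bigr),
\]
using $d(v)=d^+(v)+d^-(v)$. There is no genuine obstacle in this argument; the only point that requires any care is keeping the re-indexing straight — that the first sum counts vertices as tails (weighted by $d^+$), while the second counts them as heads (weighted by $d^-$), which is where the asymmetry between $d^+(v)^2$ and $d^-(v)d^+(v)$ arises and ultimately produces the factor $d(v)-1$.
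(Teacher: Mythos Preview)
Your argument is correct and matches the paper's proof essentially line for line: expand $S_D(uv)$, split into three sums, re-index the first by tails and the second by heads, and factor. There is nothing to add.
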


     \begin{proof}
 \begin{align*}
  \sum_{uv\in A(D)} S_D(uv) &= \sum_{uv\in A(D)} (d^+(u)+ d^+(v)-1) \\
                         & =  \sum_{uv\in A(D)} d^+(u)+ \sum_{uv\in A(D)}  d^+(v) -   \sum_{uv\in A(D)}  1  \\
                         & =  \sum_{u \in V(D)} d^+(u)d^+(u)+ \sum_{v\in V(D)}  d^+(v)d^-(v) -   \sum_{u \in V(D)}   d^+(u)  \\ 
                       & =  \sum_{w \in V(D)} d^+(w)(d^+(w)+  d^-(w) - 1)  \\ 
                         & =  \sum_{w \in V(D)} d^+(w)(d(w) - 1).     \qedhere%\tag*{\qedsymboll}
			\end{align*}  
			\end{proof}

    \section{Proof of Theorem \ref{th3}}

	In this proof, we   assume that, for every pair of distinct vertices $u$ and $v$ of $D$, 
		there is at most one arc from $u$ to $v$ and at most one arc  from $v$ to $u$.
		  That is,    $A_D\{u,v\} \subseteq \{uv, vu\}$. That is because all the  arcs
   from $u$ to $v$  can be assigned the same colour and deleting arcs does not increase    $h(X(D))$.

Let $D$ be a digraph. An arc $uv$ of $D$ is called {\em redundant}  if  $A_{D}(u) \subseteq A_{D}\{u,v\}$
or  $A_{D}(v) \subseteq A_{D}\{u,v\}$.  Note that if $uv$ is redundant then so is  $vu$ if it exists.
Let  $D'$ be the digraph obtained from $D$ by deleting all  redundant arcs. 
Let $G$ be  the (simple) underlying undirected graph of $D'$.  We have the following claim:

\medskip
\medskip

{\bf Claim  1.}  $\chi (X(D))\leq   \chi(G)$.

\medskip
\medskip

{\em Proof.}  Since $G$ is  the underlying undirected graph of $D'$,      $V(G)=V(D')=V(D)$. 
Let 
$c: V(G) \rightarrow \{1,2, \ldots, \chi(G)\}$  be a $\chi(G)$-colouring  of $G$. 
 For each arc $uv\in A(D)$,  define $f(uv):=c(u)$. We now show that   $f$ is a $3$-arc colouring of $D$.
 For every pair of adjacent arcs $uv, xy \in A(D)$, we have that  $A_{D}\{u,x\}\ne \emptyset$ 
 (that is,  $u,x$ are adjacent),  and  both $uv$ and $xy$ are   not in $A_{D}\{u,x\}$. Thus, some    arc   between $u$ and $x$
   is not redundant,   and   $u$ and $x$ are adjacent in $G$. So,  $f(uv)=c(u)\ne c(x)=f(xy)$. 
  It follows that  $f$ is a $3$-arc colouring of $D$ and  $\chi (X(D))\leq   \chi(G)$.  \qed

 Hadwiger's conjecture  is true for $k$-chromatic graphs with 
$k\le   6$. So   assume  that $\chi (X(D))\geq 7$.
Let $k:= \chi(G)$ and  let $H$ be a   $k$-critical subgraph of $G$. By Lemma \ref{le7}(a), $\delta(H)\ge k-1$.

Let $F$ be an orientation of $H$  such  that  each arc $uv$ of $F$ inherits  the orientation of an arc in $A_{D}\{u,v\}$ 
and the number of out-degree 1 vertices in $F$ is minimized.   An arc   $xy\in A(D)$  is called {\em  potential}
if $xy \notin A(F)$.  In particular, every redundant arc is potential.
    $F$ has the following property: 

\medskip

{\bf Property A.}   If   $d^+_{F}(v)= 1$   and $A_{F}(v)=\{vw\}$, then 
  there exists one potential arc  $vz$ outgoing from $v$  in $D$ such that  $vz \ne vw$, and  $z\notin V(F)$ or  
 $d^+_{F}(z)\in \{0,2\}$.   
 
\medskip

{\em Proof.}  Since $vw$ is not redundant,    $A_{D}(v) \not\subseteq A_{D}\{v,w\}$.  Let  $vz \in A_{D}(v)- A_{D}\{v,w\}$.
 Then   $vz \ne vw$.   Since  $vw$ is the unique  outgoing arc from $v$ in $F$,   $vz$ is potential.  Suppose that
$z\in V(F)$.   Suppose first that   $v$ and $z$ are not adjacent  in $F$.  Then  each arc between  $v$ and $z$ 
in $D$  including $vz$  is redundant. Since $vw \in A(D)$,     $A_{D}(z) \subseteq A_{D}\{z,v\}$.  That is,  
 no arc  is outgoing at $z$ in $D$ except possibly   $zv$.  
Thus, $d^+_{F}(z)=  0$ as desired.      Suppose next  that   $v$ and $z$ are  adjacent  in $F$.    
 By the assumption that $d^+_{F}(v)=1$,  $A_{F}\{z, v\}=zv$.  If $d^+_{F}(z) \ne2$, let $F'$ be  obtained from $F$ by 
replacing $zv$ by $vz$. 
  Then $d_{F'}^+(z) \ne 1$,   $d_{F'}^+(v) =2$  and  the out-degree of 
every other vertex remains unchanged.  Hence  $F'$ is  an orientation of $H$ with less out-degree 1 vertices  than $F$,  
which is a contradiction.   \qed

   In addition,  for  each arc $xy$ of $F$,  by the definition of $D'$,  $A_{D}(y) \not\subseteq A_{D}\{x,y\}$. 
	That is,  there is  an arc other than $yx$  outgoing from $y$ (hence, $d^+_{D}(y)\ge 1$)  and  there is a directed path
	in $D$ of length 2 starting from the   arc $xy$,  even if  $d^+_{F}(y)=0$.	
 Note that $F$ is a simple digraph and   $d_{F}(v)=d_{F}^+(v)+d_{F}^-(v)=d_{H}(v)\ge k-1$  by Lemma \ref{le7}(a).

By Claim 1, it suffices to prove  that $h(X(D))\ge k$.

 Let  $v\in V(F)$ be a vertex  with maximum out-degree $\Delta^+_{F}(v)$.   
If   $\Delta^+_{F}(v)\ge k$, let $A\subseteq A_{F}(v)$ with $|A|=k$, and let $A^f$ be a maximal   $A$-feasible set.
  Then $|A^f|=k\ge  6$   since 
   there exists a directed path of length 2 starting from every arc of $A$. 
By Lemma \ref{le2}(6)  with $p=k$ and $q=0$, 
there exists an $(A,A^{f},\emptyset)$-net  of size  $k$. Thus,  $h(X(D))\ge k$, and  the result holds.

  Now assume  that $\Delta^+(F)\le k-1$.  
By  Lemma \ref{le6} and since $F$ has minimum degree at least $k-1$,  
 \begin{eqnarray} \label{eq2}
  \sum_{uv\in A(F)} S_{F}(uv) &=& \sum_{v\in V(F)}d^{+}_{F}(v)(d_{F}(v)-1)  
                             \ge  (k-2)\sum_{v\in V(F)}d^{+}_{F}(v)  
                               = (k-2)e(F),
  \end{eqnarray}
    where $e(F)$ is the number of arcs of $F$.

 If  $\sum_{uv\in A(F)} S_{F}(uv) = (k-2)e(F)$, then  $d_{H}(x)=d_{F}(x)=k-1$ for every $x\in V(F)$. Since $\chi(H)=k$,
 by Brooks' Theorem \cite{brook}, 
 $H \cong K_k$  and $F$ is a tournament. By Lemma \ref{le1},  
 $h(X(D))\ge h(X(F))\ge k$, the result follows.

Now assume   that    $\sum_{uv\in A(F)} S_{F}(uv)  >   (k-2)e(F)$.
   We call a vertex $v$ of $F$ {\em  special} if  $d_{F}^{+}(v)=k-2$ and   $d_{F}^{-}(v)=1$ and  
 $d_{F}^{+}(v')=0$ for each $vv'\in A_F(v)$.  Let $W$ be the set of all special vertices of $F$, 
 and let $W^{+}:=\{xy\in A(F) \mid  x\in W\}$.
 Let $F'$ be the  digraph   obtained from $F$ by deleting the arcs in $W^{+}$.   Then, for each
 vertex $v$ of $F'$ with $d^{+}_{F'} (v)=d_{F'} (v)-1=k-2$, the head of (at least) one arc $vv'\in A(F')$ is not 
 a sink in $F$; that is,  $d^{+}_{F} (v')\ge1$.  Since  this outgoing arc at $v'$  in $F$ is not redundant,    $|d^{+}_{D} (v')|\ge2$. 
 
 Denote by $Q$ the set of sinks of $F$.  Then each arc of  $W^{+}$ has its tail in $W$ and head in $Q$. 
 Note that   $W$ is independent in $F$, and  $W \cap Q=\emptyset$. By Lemma \ref{le6},
 \begin{eqnarray*} 
   (k-2)e(F)   & < & \sum_{uv\in A(F)} S_{F}(uv) \\
	                &=& \sum_{v\in V(F)}d^{+}_{F}(v)(d_{F}(v)-1)  \\
                              & =& \sum_{v\in V(F)-(W\cup Q)}d^{+}_{F}(v)(d_{F}(v)-1)
                          +\sum_{v\in Q}d^{+}_{F}(v)(d_{F}(v)-1)+ \sum_{v\in W}d^{+}_{F}(v)(d_{F}(v)-1)\\
                   & =&  \Big ( \sum_{v\in V(F')-(W\cup Q)}d^{+}_{F'}(v)(d_{F'}(v)-1) \Big ) +0+   (k-2)\Big (|W^+| +   \sum_{v\in W}d^{+}_{F'}(v)\Big ).
  \end{eqnarray*}

Since vertices in $W \cup Q$ have outdegree 0 in $F'$, 
 \begin{eqnarray*} 
    (k-2)e(F)   & < &  \Big (\sum_{v\in V(F')}d^{+}_{F'}(v)(d_{F'}(v)-1) \Big )+ |W^+| (k-2)   \\
          & = &     \Big (\sum_{uv\in A(F')} S_{F'}(uv) \Big ) + |W^+| (k-2).
  \end{eqnarray*}

 Thus $\sum_{uv\in A(F')} S_{F'}(uv) 
  >   (k-2) (e(F)-|W^+|) = (k-2)e(F')$. Let $uv$ be an arc  of $F'$
 with  maximum $S_{F'}(uv)$.  Thus, $S_{F}(uv)\ge S_{F'}(uv) \ge k-1$.
If $v\in W$, then $d_{F'}^+ (v)=0$ and  $d_{F'}^+ (u)\ge k$, which 
contradicts the assumption that $\Delta ^+ (F)\le k-1$.  Hence  $v\notin W$.

 Denote  $A_{F}(u)=\{uv, uu_1, uu_2, \ldots, uu_i\}$ and  $A_{F}(v)=\{vv_1, vv_2, \ldots, vv_j\}$,  where
$i+j= S_F(uv) \ge k-1$. 
 Set $T:=\{u_1,  u_2, \ldots, u_i\} \cap\{v_1, v_2, \ldots, v_j\}$.  
 Denote $N_1:=N_{F}(u)-\{v\}$ and 
    $N_2:=N_{F}(v)-\{u\}$.  Say $N_1=\{x_1,x_2,\ldots, x_r\}$, and $N_2 =\{y_1,y_2,\ldots, y_s\}$. 
		Since $F$ has minimum degree at least $k-1$,   both   $r$ and $s$ are at least $k-2$.
    
     Since  the arc   $A_{F}\{u, x_l\}$  is not redundant,   $A_{D}(x_l) \not \subseteq A_{D}\{u, x_l\}$. Thus,   for each $x_l\in N_1$,  
   to  arc $A_{F}\{u, x_l\}\in A(F)$ 
     we can associate an arc, denoted
    $\varphi(u, x_l)$,  which is chosen from   $A_{D}(x_l)-A_{D}\{u, x_l\}$.
    Similarly,   for each  $y_l\in N_2$,   associate an arc,  denoted
    $\varphi(v, y_l)$,  in   $A_{D}(y_l)-A_{D}\{v, y_l\}$  to  arc $A_{F}\{v, y_l\}\in A(F)$.

    Choose these arcs  $\varphi(u, x_l)$ and $\varphi(v, y_l)$  such that if  $\Sigma :=\cup_{l=1}^r  \varphi(u, x_l)$ 
       and  $\Pi:= \cup_{l=1}^s  \varphi(v, y_l)$ then  $t:= |\Sigma\cap \Pi|$  is  minimized.  
		We now prove   that, for each $ww'\in \Sigma\cap \Pi$, 
      $ww'$ is the unique arc outgoing from $w$ in $D$,     $A_{F}\{u, w\}=uw$,   $A_{F}\{v, w\}=vw$  and  $w'\notin\{u,v\}$. 
     Since  $ww'= \varphi(u, w) =\varphi(v, w)$,  we have $w'\notin\{u,v\}$.    Suppose that 
     $|A_{D}(w)|\ge 2$, and $ww''$ is an arc outgoing from $w$ other than $ww'$ in $D$. Then  at least  one
     of $u$ and $v$,  say  $u$,  is not equal to  $w''$. 
       Now set $\varphi(u, w) :=  ww''$ and keep   $\varphi(v, w) =  ww'$. Then  $|\Sigma\cap \Pi|$ is decreased.
     Thus,  $ww'$ is the unique arc outgoing from $w$ in $D$.  Since $A_{D}(w)=\{ww'\}$,  
     we have that  $A_{F}\{u, w\}=uw$ and    $A_{F}\{v, w\}=vw$.  
     
     Denote $\Sigma\cap \Pi =\{w_1w_1',w_2w_2', \ldots, w_tw_t'\}$.  Then $w_l\in T$ for each $l\in [1,t]$ 
     and $t \le |T|\le \min\{i,j\}$.
     Consider the following cases:
  
  \bigskip
 
 {\bf Case 1.}   $S_{F}(uv)\ge k$.
 
   \bigskip
 
    	In this case,   we construct an    $(A, A ^f, A^c)$-net $\AA$ and a  $(B, B^f, B^c)$-net  $\BB$,
			for some $A\subseteq A_F(u)-\{uv\}$ and $B\subseteq A_F(v)$, such that  
			$(A\cup A^f \cup A^c)\cap (B \cup B^f \cup \B^c)=\emptyset$.
			Since 	
				each branch set in $\AA$ contains an outgoing arc at $u$ other than $uv$, and each branch set in $\BB$ contains an outgoing arc 
		at $v$ other than 	$vu$,
	  each branch set in $\AA$ is adjacent in $X(D)$ to each branch set in $\BB$. 
	Since each branch set in $\AA$ is contained in $A\cup A^f \cup A^c$, and each branch set in $\BB$ is contained in
		$B \cup B^f \cup \B^c$,  no  branch set in $\AA$  intersects a  branch set in $\BB$.  
		  Hence $\AA \cup \BB$  defines a complete minor in $X(D)$   on  $|\AA|+  |\BB|$ vertices.  In most  cases we 
				construct  $\AA$  and $\BB$  such that  $|\AA|+  |\BB|\ge k$,  giving a   $K_k$-minor  in $X(D)$, as desired.  
	 Finally,   we always choose $A^c\subseteq \Sigma$ and $B^c\subseteq \Pi$
	in such a way that  $A^c \cap B^c=\emptyset$.

  Note that $i+j\ge k$. By the assumption that $\Delta^+(F)\le k-1$, we have  $1\le i\le k-2$
  and  $2\le j\le k-1$.  
  
 \medskip 
{\bf Case 1.1.}   $j=k-1$:   Then  $i\ge 1$.  Let $B:=A_{F}(v)$, and $B^f$  be a maximal 
  $B$-feasible set  in $D$. 
	For   $y_l\in N_2$, since $A_{D}\{y_l,v\}$ is not redundant,   $A^+_{D}(y_l)- A_{D}\{y_l,v\}\ne \emptyset$.
	Thus, 
	$|B^f| = |B|=k-1\ge 4$. By Lemma \ref{le2}(6) with $p=  |B^{f}|= k-1$ and $|B^{c}|=0$,
  there exists  in $D$ a $(B,B^{f},\emptyset)$-net   $\BB$
  of size $k-1$. Then $\BB  \cup \{\{uu_1\}\}$    forms the $k$ branch sets of a $K_k$-minor 
  in $X(D)$, since each branch set of   $\BB$ contains an outgoing arc at $v$  other than $vu$ and
		is thus  adjacent to $uu_1$ in $X(D)$ (since $vu \notin B$). 
   
  \medskip 
{\bf Case 1.2.}  $j\le k-2$:   
   Then $0 \le t\le k-2$.   Recall that  $t= |\Sigma\cap \Pi|\le |T|$.

	\medskip
  {\bf Case 1.2.1.}   $t=k-2\ge 3$:   Suppose first that   $\Sigma -  \Pi \ne \emptyset$. 
	Let $x_lx_l'\in \Sigma -\Pi$.  Since $|A_F(u)-\{uv\}|=i\ge t\ge 3$, there are distinct arcs $uu_{a}, uu_{b}$ 
	in $A_F(u)-\{uv\}$
	with  $x_l  \notin \{u_{a},  u_{b}\}$. Let
  $A:=\{uu_{a}, uu_{b}\}$.  Note that  $x_lx_l'$ is $A$-compatible.
	Then    $\AA := \{ \{uu_{a}\}$,  $\{uu_{b},x_lx_l'\}\}$ is an  $(A, \emptyset, \{x_lx_l'\})$-net   of size $2$.
Let $B$ be a set of    $k-2$ arcs in   $A_{F}(v)$.  Then $B^{f}:=\{\varphi(v,y):  vy\in B\}$ is a $B$-feasible set of  $k-2$ arcs in $\Pi$.
	By Lemma~\ref{le2}(6) with $p=|A^{f}|=k-2$ and $|A^{c}|=0$,  there is a  $(B, B^{f}, \emptyset)$-net  $\BB$  of size  $k-2$.
			Each branch set in $\AA$ contains an outgoing arc at $u$ other than $uv$, and each branch set in $\BB$ contains an outgoing arc 
		at $v$ other than 	$vu$.
		Thus each branch set in $\AA$ is adjacent in $X(D)$ to each branch set in $\BB$. Since  $x_lx_l'\notin \Pi$ and
		$B^f\subseteq  \Pi$,  we have $(A\cup \{x_lx_l'\})\cap (B \cup B^f)=\emptyset$. Thus,  no branch set in $\AA$ intersects
		a branch set in $\BB$.  Hence $\AA \cup \BB$ is    a $K_k$-minor  in $X(D)$.

		By symmetry and since $uv$ is not used in this case,  
   if  $ \Pi-  \Sigma \ne \emptyset$, then we  obtain  a $K_k$-minor  in $X(D)$.

 Now  assume that  $\Sigma =\Pi$. Then   $|\Sigma| =|\Pi| = t=k-2$. 
   Set $w_0:=v$ and $w_0':=w_1$.  
   For $0\le l \le t$,   let     $B_{l}:=\{uw_{l},w_{l+1}w_{l+1}'\}$, where subscripts
    are taken modulo $t+1$;  and  let $B_{t+1}:=\{vw_2\}$. 
	For   $0\le l< l'\le t$,   either $uw_l$ is adjacent to $w_{l'+1}w_{l'+1}'$ or  
	$uw_{l'}$ is adjacent to $w_{l+1}w_{l+1}'$. Thus   $B_{l}$ is adjacent to   $B_{l'}$.  
    Note  that  $vw_2\in B_{t+1}$ is adjacent to $w_1w_1'\in B_0$ and   $uw_{l}\in B_{l}$ 
    with $1\le l\le t$. Thus  $B_{t+1}$  is adjacent to every  $B_{l}$ 
    with $0\le l\le t$. Therefore,  $B_0, B_1, \ldots, B_{t+1}$  form the $t+2=k$   branch sets of a $K_k$-minor 
  in $X(D)$.
  
   \delete{ Let $B_0:=\{uv,w_1w_1'\}$,  $B_1:=\{uw_1,w_2w_2'\}$, $\ldots$
    $B_{t-1}:=\{uw_{t-1},w_tw_t'\}$,  $B_{t}:=\{uw_{t},vw_1\}$, and 
    }
 
	\medskip
  {\bf Case 1.2.2.}  $\lceil \frac{k}{2}\rceil\le t \le k-3$:   
    \delete{We shall construct a $u$-net of size $t$, and 
   a $v$-net of size   $k-t$.  To achieve this, we allocate the first  $k-1-t$ arcs of 
   $\Sigma\cap \Pi$ to the construction of  the desired  $v$-net, and the remaining  $t-(k-1-t)$  to that of 
   the  desired $u$-net.}
	  For $k-t\le l\le t$,  set    $\a_{l}:=w_{l}w_{l}'$.   Choose  $k-2-t$ arcs $\a_{t+1}, \a_{t+2}, \ldots, \a_{k-2}$   from $\Sigma-  \Pi$ 
    (which exist since $|\Sigma-\Pi|= r-t\ge k-2-t$).
  Denote $A:=\{uw_1,  uw_2, \ldots, uw_t\}$.  Then,    $\a_{l}$ is $A$-feasible  when $k-t\le l\le t$, 
	and  $\a_{l}$ is $A$-compatible   when $t+1\le l\le k-2$. 
	Let 
	$A^f:= \{\a_{k-t},\a_{k-t+1}, \ldots, \a_{t}\}$ and  $A^c:=\{\a_{t+1}, \a_{t+2}, \ldots, \a_{k-2}\}$. 
	Note that $A^f$  is  $A$-feasible and  $A^c$  is  $A$-compatible.  
	 By Lemma~\ref{le2}(6), there exists  an  $(A, A ^f, A^c)$-net  $\AA$ of size $t$ in  $X(D)$.

   \delete{
  The desired $u$-net is constructed as follows:  $B_1=\{uw_1\}$,
   $B_2=\{uw_2,\a_{k-t}\}$, $B_3=\{uw_3, \a_{k-t+1}\}$, $\ldots$, 
    $B_{2t+2-k}=\{uw_{2t+2-k}, \a_{t}\}$,  $B_{2t+3-k}=\{uw_{2t+3-k}, \a_{t+1}\}$,
   $\ldots$,
    $B_{t-2}=\{uw_{t-2}, \a_{k-4}\}$,
   $B_{t-1}=\{uw_{t-1},  \a_{k-3}\}$, and  $B_{t}=\{uw_{t}, \a_{k-2}\}$.  
  Note that  $B_1$  is adjacent to every $B_l$ with $l\ne 1$ since  $uw_1$ is adjacent to every  $\a_{l'}$ with  $l'\in [k-t, k-2]$.
 For distinct $l,l'\in [2,t]$,  either $uw_l$ is adjacent to  $\a_{k-t+2+l'}$   or 
  $uw_{l'}$ is adjacent to  $\a_{k-t+2+l}$  since  either $l\ne k-t+2+l'$ or  $l' \ne k-t+2+l$ (because $l\ne l'$).
   So,  $B_{l} = \{uw_l, \a_{k-t+2+l}\}$ and  $B_{l'}=\{uw_{l'}, \a_{k-t+2+l'}\}$ are adjacent. 
	Therefore, $B_1, B_2, \ldots, B_t$ form a $u$-net  of size $t$.
[{\bf another way to understand}: denote $A:=\{uw_1,  uw_2, \ldots, uw_t\}$, $A^f:= \{a_{k-t},\a_{k-t+1}, \ldots, \a_{t}\}$, and 
   $A^c:=\{\a_{t+1}, \a_{t+2}, \ldots, \a_{k-2}\}$. By Lemma \ref{le2}, a $u$-net of size $t$ can be constructed using $A\cup A^f \cup A^c$.]
   }
   
   Next, for $1\le l\le k-t-1$, set   $\b_{l}:=w_{l}w_{l}'$.
  Choose  $k-2-t$ arcs $\b_{k-t}$, $\b_{k-t+1}$, $\ldots, \b_{2k-2t-3}$ from $\Pi- \Sigma$ 
    (which exist since $|\Pi- \Sigma|= s-t\ge k-2-t$).
   Note that  $|\Sigma\cap \Pi|= t\ge k-t$  and $2k-2t-3\ge k-t$. 
  Let $B:=\{vw_1,  vw_2, \ldots, vw_{k-t}\}$. 
	Then   $\b_{l}$ is $B$-feasible  when $1\le l\le k-t-1$, 
	and  $\b_{l}$ is $B$-compatible   when $k-t\le l\le 2k-2t-3$. 
	Let 
	$B^f:= \{\b_{1},\b_{2}, \ldots, \b_{k-t-1}\}$, and 
   $B^c:=\{\b_{k-t}, \b_{k-t+1}, \ldots, \b_{2k-2t-3}\}$.   Note that $B^f$ is  $B$-feasible  and 
   $B^c$ is  $B$-compatible. If $t=k-3$,  then by  Lemma~\ref{le2}(4),  
	there exists  a  $(B, B^f, B^c)$-net  $\BB$ of size  $k-t$  in  $X(D)$.   
  Otherwise $t\le k-4$  and by 
 Lemma \ref{le2}(6) with $p=k-t\ge 4$ and $|A^{f}|=k-t-1$ and $|A^{c}|=k-t-2$, 
    there exists  a  $(B, B^f, B^c)$-net  $\BB$ of size  $k-t$  in  $X(D)$.

   \delete{
  A $v$-net of size $k-t$  can be  constructed   as follows: 
    $B_1=\{vw_1,  \b_2\}$,  $B_2=\{vw_2, \b_{3}\}$, 
   $B_3=\{vw_3, \b_{4}\}$, $\ldots$,  $B_{k-t-2}=\{vw_{k-t-2}, \b_{k-t-1}\}$,
    $B_{k-t-1}=\{vw_{k-t-1}, \b_{k-t}\}$,   $B_{k-t}=\{vw_{k-t}, \b_{k-t+1}\}$, 
    where $\b_{k-t+1}=\b_1$ if $k-t=2k-2t-3$. Similarly, $B_1, B_2, \ldots, B_{k-t}$ form 
    a $v$-net of size $k-t$.  Since each branch set of the $u$-net  (resp. $v$-net) formed above contains 
    an arc other than $uv$  (resp. $vu$)   outgoing from $u$  (resp. $v$),  the union of all these branch sets form the 
    $k$ branch sets of a $K_k$-minor in $X(D)$.
   }

	\medskip
  {\bf Case 1.2.3.}    $t\le \lceil \frac{k}{2}\rceil-1$: 
  Let $j':=k-i$.  Since $i+j= S_F(uv)\ge k$, we have  $j'\le j$. 
	
   If $t=0$, then  $\Sigma\cap \Pi =\emptyset$.   Let  $A:= \{uu_1,uu_2,\ldots, uu_i\}$. 
    Note that each arc in $\Sigma$ is either $A$-feasible or  $A$-compatible, and no two 
		arcs in  $\Sigma$ share a tail.  Let $A^f$ ($A^c$, respectively)  be the set of
		$A$-feasible  ($A$-compatible, respectively)    arcs in    $\Sigma$.  Then   $A^f$
		is $A$-feasible  and  $A^c$ is  $A$-compatible.  Note that  $|A^f|+|A^c| =|\Sigma|  \ge i$, and 
	   $A^c  \ne \emptyset$  if  $i\le 2$ ($\Sigma$ contains an $A$-compatible arc  since
	 $|\Sigma| =r \ge k-2 \ge 3$). 
	If $i\ge 3$, then by  Lemma~\ref{le2}(3) or Lemma~\ref{le2}(6) with $p=|A^{f}|=i$, there is an  
	$(A,A^f, \emptyset)$-net  $\AA$  of size $i$. 
	If $i\le 2$, then  $A^c  \ne \emptyset$ (since   $|\Sigma| =r \ge k-2 \ge 3>i$).  By 
		  Lemma~\ref{le2}(1) or  (2) with $p=|A^{f}|=i$ and $|A^{c}|\ge 1$,  there is an  
	$(A, \emptyset, A^c)$-net  $\AA$  of size $i$. 
	Similarly,  let   $B \subseteq A_{F}(v)$ with $|B|=j'$.   Let $B^f$ ($B^c$, respectively)  be the set of
		$B$-feasible  ($B$-compatible, respectively)    arcs in    $\Pi$.  Note that  $|B^f|+|B^c| =|\Pi| =s\ge k-2\ge j\ge j'$.
	As in the construction of $\AA$, 	by Lemma \ref{le2},  there exists a  $(B,B^f, B^c)$-net  $\BB$  of size  $j'$.   
	   $\AA\cup \BB$ forms  a 
    $k$ branch sets of a $K_k$-minor in $X(D)$.

		Suppose that   $t\ge 1$  and $j=k-2$.
   If $t=1$,  then let  	 $A$ be a subset of $A_{F}(u) -\{uv\}$ with $uw_1 \in A$ and 
	$|A|=2$.   Note that  $|\Sigma -\Pi|=r-t\ge k-3\ge 3$.  Then  at least one arc in 
	 $\Sigma -\Pi$  is $A$-compatible.     If $t\ge 2$,  then let  	 $A:=\{uw_1, uw_2\}$. Then   $|\Sigma -\Pi|=r-t\ge k-2 - \lceil \frac{k}{2}\rceil +1 = 
	\lfloor \frac{k}{2}\rfloor -1\ge 2$  because $k\ge 6$.  Again,   at least one arc in   $\Sigma -\Pi$ is $A$-compatible.
	 In both cases, by Lemma \ref{le2}(2), there exists an  $(A,\emptyset, A^c)$-net $\AA$  of size $2$,  where 
	 $A^c$ is the set of $A$-compatible arcs in $\Sigma- \Pi$.   
  Let  $B:=A_{F}(v)$.   Note that each arc in $\Pi$ is either $B$-feasible or  $B$-compatible, and no two 
		arcs in  $\Pi$ share a tail.   Let $B^f$ ($B^c$, respectively)  be the set of
		$B$-feasible  ($B$-compatible, respectively)    arcs in    $\Pi$.
	 Since $|\Pi|=s\ge k-2=j\ge 4$,  by  Lemma \ref{le2}(6),  there is  a  $(B,B^f, B^c)$-net  $\BB$   of size $j$.  Then  
   $\AA\cup \BB$ forms the
    $k$ branch sets of a $K_k$-minor in $X(D)$.

  Suppose now that   $t\ge 1$ and $j \le k-3$.   Note that $i\ge t$.   Consider two possibilities:  (i) $i=t$, and (ii) $i\ge t+1$. 
  If  $i=t$,   then $t=i\ge k-j\ge 3$.
  Let $A:=\{uu_1,uu_2,\ldots, uu_t\}= \{uw_1,uw_2,\ldots, uw_t\}$.  
  	Note that $|\Sigma-\Pi|=r-t\ge k-2-t\ge (2t+1)-2-t=t-1\ge 2$.
		Since $\Sigma-\Pi \ne \emptyset$,  
	 at least one	arc in $\Sigma-\Pi$  is 
  $A$-compatible.     Let $A^f$ ($A^c$, respectively)  be the set of
		$A$-feasible  ($A$-compatible, respectively)    arcs in    $\Sigma-\Pi$. 
  By Lemma \ref{le2}(2), (4) or (6),
	 there exists an  $(A,A^f, A^c)$-net  $\AA$  of size $i$.  
 Let   $B:=\{vv_1,vv_2,\ldots, vv_{j'}\}$.
 Let $B^f$ ($B^c$, respectively)  be the set of
		$B$-feasible  ($B$-compatible, respectively)    arcs in    $\Pi$.
		Note that $j'=k-t\ge k- \lceil \frac{k}{2}\rceil+1=\lfloor \frac{k}{2} \rfloor +1 \ge 3$  and
  $|\Pi|=s\ge k-2\ge j\ge j'$.  By Lemma \ref{le2},
	there is	a  $(B,B^f, B^c)$-net  $\BB$   of size $j'$.

  If $i\ge t+1$, then  $j'=k-i\le k-t-1$. 
 Let   $B:=\{vv_1,vv_2,\ldots, vv_{j'}\}$    be a subset of $A_F(v)$ with $vw_1\in B$.
 By the assumption that $j \le k-3$,  there is at least 
 one incoming arc other than $uv$ at $v$.  Thus,   at least   one  arc in $\Pi- \Sigma$   is   $B$-compatible.
Let $B^f$ ($B^c$, respectively)  be the set of
		$B$-feasible  ($B$-compatible, respectively)    arcs in   $\Pi- \Sigma$. 
Note that  $|\Pi- \Sigma|=s-t\ge k-t-2\ge j'-1$. By Lemma  \ref{le2}(2), (4) or (6),
	there is a  $(B,B^f, B^c)$-net  $\BB$   of size $j'$. 
Let  $A:=\{uu_1,uu_2,\ldots, uu_{i}\}$.   Let $A^f$ ($A^c$, respectively)  be the set of
		$A$-feasible  ($A$-compatible, respectively)    arcs in    $\Sigma$. 
   Since $|\Sigma|=r \ge k-2\ge i$,  by Lemma  \ref{le2}(2), (3) or (6), there exists an  $(A,A^f, A^c)$-net  $\AA$  of size   $i$.  
    
   In each case above,  $\AA\cup \BB$  forms  a $K_k$-minor in $X(D)$.

  \bigskip
 
 {\bf Case 2.}   $S(uv)=k-1$:  Then $i+j=k-1$.  
 
   \bigskip
 
  	In this case,   we construct an    $(A, A ^f, A^c)$-net $\AA$ and a  $(B, B^f, B^c)$-net  $\BB$ as in 
	Case 1, except that   $|\AA|+  |\BB|= k-1$.
	We then define one further  branch set $B_0$ that,  with  $\AA$  and $\BB$,  forms the desired     $K_k$-minor  in $X(D)$.

\medskip

     {\bf Case 2.1.}     $j=1$:    Then  $i=k-2$.  Let  $A:= A_{F}(u)-\{uv\}$.  
			Let $A^f$ ($A^c$, respectively)  be the set of
		$A$-feasible  ($A$-compatible)    arcs in  $\Sigma -   \Pi$.    Since 
		 $t\le \min\{i,j\}=1$ and $r\ge k-2$, we have  
		 $|\Sigma -   \Pi| \ge r-t\ge k-3$. 
		Since  $|A^f|+|A^c| = |\Sigma -   \Pi| \ge k-3$  and $i=k-2\ge5$, 
		by Lemma  \ref{le2}(6), there exists an  $(A,A^f, A^c)$-net  $\AA$  of size   $i$.  
By Property A,    
  there exists a potential   arc   $vz\ne vv_1$   outgoing from $v$
  in $D$, such that  $z\notin V(F)$ or    $d^+_{F}(z)\in \{0, 2\}$. 
	Clearly,  $z\ne u$ since   $d^+_{F}(u)=i+1>3$. 
	Let $B:=\{vv_1, vz\}$,  and  $\t$ be an arc in $\Pi-\Sigma$
	such that  $\t\ne \varphi(v,v_1)$  and  $\t\ne \varphi(v, z)$.   $\t$ exists because $|\Pi-\Sigma|=s-t\ge k-2-t\ge k-3\ge 3$.
	Then    $\BB:= \{\{vv_1\}, \{vz, \t\}\}$  is a $(B, \emptyset, \{\t\})$-net of size $2$.  
    Thus,  $\AA\cup \BB$       forms  a $K_k$-minor in $X(D)$.

\medskip
    {\bf Case 2.2.}       $2\le j \le k-3$:  Then $2\le i \le k-3$.  Let $U:=N_1\cap N_2$ be the 
    common neighbourhood of $u$ and $v$ in $F$. 
    Say $U=\{a_1,a_2,\ldots, a_{|U|}\}$.  Then $T\subseteq U$ and $t\le  |T|\le |U|$.
    Recall that $t=|\Sigma \cap\Pi|$.

	\medskip
	 {\bf Case 2.2.1.}   $t\ge 2$: 
	Let $A:= A_{F}(u)-\{uv\}$.   Since 
  $2\le t \le \min\{i,j\}$,   we have $i =k-1-j\le k-1-t$. 
    Since there is at least one incoming 
	arc at $u$ (because $i\le k-3$),  	at least one arc in  $\Sigma - \Pi$ is  $A$-compatible. 
	Let $A^f$ ($A^c$, respectively)  be the set of
		$A$-feasible  ($A$-compatible)    arcs in  $\Sigma -   \Pi$. 
	Note that $|A^f|+|A^c| = |\Sigma - \Pi|=r-t\ge k-2-t  \ge i-1$.
  By Lemma \ref{le2}(2), (4), (5) or (6),  there exists an  $(A,A^f, A^c)$-net  $\AA$  of size   $i$. 
	Let  $B:= A_{F}(v)$. Let  $B^f$ ($B^c$, respectively)  be the set of
		$B$-feasible  ($B$-compatible)    arcs in    $\Pi  - \Sigma$. 
	Similarly,  a  $(B,B^f, B^c)$-net  $\BB$   of size  $j$ exists (since $2\le i,j\le k-3$ and $uv$ is not in $\AA$).

		Let $B_0:=\{w_1w_1', w_2w_2', uv\}$.  Then  $B_0$
	induces a connected subgraph in $X(D)$ by noting that  $uv$ is adjacent to  both $w_1w_1'$ and $w_2w_2'$. 
  Each branch set of $\AA$ and 
    $\BB$ contains an arc outgoing from $u$ or $v$, which is adjacent to 
      $w_1w_1'$ or $w_2w_2'$. Thus  $B_0$ is adjacent to each branch set of  $\AA\cup \BB$. Hence  
			$\AA\cup \BB \cup \{B_0\}$   forms a $K_k$-minor in $X(D)$.

\medskip
   {\bf Case 2.2.2.}    $t\le 1$ and $U \cap N_{F}^{-}(v)\ne \emptyset$: That is, there is an arc $av$ in $F$
  for some vertex $a\in U$. If there exists an arc $a\bar{a}$  in $D$  with
  $\bar{a}\notin \{u, v\}$,  then let $B_0:=\{uv, a\bar{a}\}$.

  Suppose that there is no such  arc  $a\bar{a}$. That is, $A_{D}(a)\subseteq \{au, av\}$. 
 Clearly,  $av\in A_{D}(a)$. 
 Since 
  $A_{F}\{v,a\}$ is not  redundant in $F$, we have    $A_{D}(a)- A_{F}\{v,a\} \ne \emptyset$.
   Thus  $au\in A_{D}(a)$  and  $A_{D}(a)= \{au, av\}$.
  Let $\bar{a}$ be an in-neighbour  other than $u,v$ of $a$ in $F$. Then $A_{F}\{a, \bar{a}\}= \bar{a}a$. 
  Let $\bar{\bar{a}}\ne a$ be an out-neighbour of $\bar{a}$ in $F$. 
   Note that  $\bar{\bar{a}}$ exists since 
  $\bar{a}a$ is not redundant. Then, by the minimality of $|\Sigma \cap\Pi|$, 
 we have   $\bar{a}\bar{\bar{a}}\notin \Sigma \cap\Pi$.  Let $B_0:=\{uv, au, av, \bar{a}\bar{\bar{a}}\}$.  
  Then  $\max\{|B_0 \cap\Sigma|, |B_0 \cap\Pi|\}\le 2$ and $|B_0 \cap\Sigma|+ |B_0 \cap\Pi|\le 3$.
   
    Let $A:= A_{F}(u)-\{uv\}$  and  $B:= A_{F}(v)$. 
     We show that there  is  a net  $\AA$  at $u$ of size   $i$, and a net $\BB$  at $v$ of size   $j$,  
     such that 	$\AA\cup \BB \cup \{B_0\}$   forms a $K_k$-minor in $X(D)$.  
     
  First suppose that $3\le i,  j\le k-4$. If $|B_0 \cap\Sigma|\le 1$, 
 let $A^f$ ($A^c$, respectively)  be the set of
		$A$-feasible  ($A$-compatible)    arcs in  $\Sigma -\Pi -B_0$. 
		If $|B_0 \cap\Sigma|=2$, then $|B_0|=4$ and $\bar{a}\bar{\bar{a}}\in \Sigma \cap B_0$. 
		Thus, $\bar{a}$ is  a neighbour of $u$ in $F$.  Note that $\bar{a}a\notin \Sigma$ and   $\bar{a}a$ is $A$-feasible   or $A$-compatible.  
		Let $A^f$ ($A^c$, respectively)  be the set of
		$A$-feasible  ($A$-compatible)    arcs in  $(\Sigma -\Pi -B_0)\cup \{\bar{a}a\}$. 
	 In both cases,  $|A^f|+|A^c|  \ge r-t-1\ge k-2-2\ge   i$. 
  By Lemma \ref{le2}(3), (4), (5) or (6),  there exists an  $(A,A^f, A^c)$-net  $\AA$  of size   $i$. 
	  Let  $B^f$ ($B^c$, respectively)  be the set of
		$B$-feasible  ($B$-compatible)    arcs in    $\Pi  - (B_0\cup\{\bar{a}a\})$. 
	  Note that all arcs of $B_0\cup\{\bar{a}a\}$ except $uv$ are outgoing from at most two vertices
		(that is, $a$ and $\bar{a}$).
	 We have   $|B^f|+|B^c|  =  |\Pi  - (B_0\cup\{\bar{a}a\})|\ge s-2\ge k-4\ge   j$.	
	Similarly, by Lemma \ref{le2},  a  $(B,B^f, B^c)$-net  $\BB$   of size  $j$ exists.
 
  Next suppose  that  $i = k-3$ and $j=2$.  If $|B_0 \cap\Sigma|\le 1$, 
 let $A^f$ ($A^c$, respectively)  be the set of
		$A$-feasible  ($A$-compatible)    arcs in  $\Sigma  -B_0$. 
		If $|B_0 \cap\Sigma|=2$,  
		let $A^f$ ($A^c$, respectively)  be the set of
		$A$-feasible  ($A$-compatible)    arcs in  $(\Sigma -B_0)\cup \{\bar{a}a\}$, where $a, \bar{a}$ are as above. 
	 In both cases, we have   $|A^f|+|A^c|  \ge r-1\ge k-3  =i$. 
  By Lemma \ref{le2} (6),  there exists an  $(A,A^f, A^c)$-net  $\AA$  of size   $i$. 
  Let  $B^f$ ($B^c$, respectively)  be the set of
		$B$-feasible  ($B$-compatible)    arcs in    $\Pi  -\Sigma- (B_0\cup\{\bar{a}a\})$. 
		Since  $v$ has in $F$ at least $k-3\ge 4$ in-neighbours, one of which is not in $\{u, a, \bar{a}\}$. 
		Thus  $B^c\ne \emptyset$.
   By Lemma \ref{le2}(2),  a  $(B,B^f, B^c)$-net  $\BB$   of size  $2$ exists.

  Suppose that  $i = 2$ and $j=k-3$.  Let  $B^f$ ($B^c$, respectively)  be the set of
		$B$-feasible  ($B$-compatible)    arcs in    $\Pi  - B_0$.  Then 
		 $|B^f|+|B^c|  =  |\Pi  - B_0|\ge s-2\ge k-4=  j-1$.	 By Lemma \ref{le2}(6), there exists  a  $(B,B^f, B^c)$-net  $\BB$   of size  $j$.
  If $|B_0 \cap\Sigma|\le 1$, 
 let $A^f$ ($A^c$, respectively)  be the set of
		$A$-feasible  ($A$-compatible)    arcs in  $\Sigma -\Pi -B_0$. 
		If $|B_0 \cap\Sigma|=2$,  
		let $A^f$ ($A^c$, respectively)  be the set of
		$A$-feasible  ($A$-compatible)    arcs in  $(\Sigma -\Pi -B_0)\cup \{\bar{a}a\}$,  where $a, \bar{a}$ are as above.  
	 In both cases,  $|A^f|+|A^c|  \ge r-t-1\ge k-2-2\ge   3$. 
    Recall that $A=\{uu_1,uu_2\}$.  Note that  $|(A^f\cup A^c) - \{\varphi(u,u_1)\}|\ge2$. 
    Let $\t_1, \t_2$ be two arcs in $(A^f\cup A^c) - \{\varphi(u,u_1)\}$. Then, at least one arc, $\t_2$ say, of  $\t_1, \t_2$
    is not equal to $\varphi(u,u_2)$. Note that  $\t_2$ is adjacent to both $uu_1$ and $uu_2$, and $\t_1$ is adjacent to  $uu_1$ in  $X(D)$. 
    Let $\AA:=\{\{uu_1, \t_1\},\{uu_2, \t_2\}\}$.  Then,  $\AA$ is a   $(A,A^f, A^c)$-net     of size  $2$.

    In each case, 
     $B_0$
	induces a connected subgraph in $X(D)$.  And $uv\in B_0$ is adjacent to 
  each branch set of $\AA$,  and an arc outgoing from $a$ other than $av$   is adjacent to 
    each branch set of  $\BB$. Hence 
			$\AA\cup \BB \cup \{B_0\}$   forms a $K_k$-minor in $X(D)$.

\medskip
   {\bf Case 2.2.3.}    $t\le 1$ and $U \cap N_{F}^{-}(v)= \emptyset$  and $|U|\ge2$:
   That is, each arc  in $F$ between a vertex of $U$  and $v$ is outgoing at $v$. 
  Let $A:= A_{F}(u)-\{uv\}$  and  $B:= A_{F}(v)$.  We consider two
   situations.

  First suppose  that  $U$ is not independent in $F$.  That is, there is an arc  $\t$ in $F$ joining  two vertices in $U$. Say, $\t= a_1a_2$.
   Since $A_{F}\{u,a_2\}$ is not redundant, in $D$ there is an arc   $\g\ne a_2u$
    outgoing from $a_2$. (It may happen that   $\g\in \{a_2a_1, a_2v\}$.)
    Let $B_0:=\{uv, \t, \g\}$.  Since $uv$ is adjacent to both $\t$ and $\g$,    $B_0$ induces a connected subgraph
    in $X(D)$.  Note that $\max\{|B_0 \cap\Sigma|, |B_0 \cap\Pi|\}\le 2$.

 If $i\ge j$, then $j\le\frac{k-1}{2}\le k-4$. 
Let $A^f$ ($A^c$, respectively)  be the set of
		$A$-feasible  ($A$-compatible)    arcs in  $\Sigma  -B_0$;  and, let  $B^f$ ($B^c$, respectively)  be the set of
		$B$-feasible  ($B$-compatible)    arcs in    $\Pi -\Sigma - B_0$. 
	Then $|A^f|+|A^c| \ge r-2\ge k-2-2\ge   i-1$. 
  By Lemma \ref{le2}(6),  there exists an  $(A,A^f, A^c)$-net  $\AA$  of size   $i$. 
  Also,     $|B^f|+|B^c|  =  |\Pi  -\Sigma - B_0|\ge s-t-2\ge k-5\ge   j-1$.
  Note that there is at least one (in fact many) incoming arc $v_lv$ at $v$  with 
  $\varphi(v_l,v)\notin \Sigma\cup B_0$. Thus  $\varphi(v_l,v)\in B^c$ and   $|B^c|\ge 1$.
  By Lemma \ref{le2}(2), (4) or (6),   a  $(B,B^f, B^c)$-net  $\BB$   of size  $j$ exists.
  If $i\le j$, then $i\le\frac{k-1}{2}\le k-4$. 
	Now let $A^f$ ($A^c$, respectively)  be the set of
		$A$-feasible  ($A$-compatible)    arcs in  $\Sigma -\Pi  -B_0$;  and  let  $B^f$ ($B^c$, respectively)  be the set of
		$B$-feasible  ($B$-compatible)    arcs in    $\Pi - B_0$. 
	Similarly,   we obtain  an  $(A,A^f, A^c)$-net  $\AA$  of size   $i$ 
  and  a  $(B,B^f, B^c)$-net  $\BB$   of size  $j$.
  
  Since each arc outgoing from $u$ or $v$ is adjacent to $\t$ or  $\g$, each branch set of 	$\AA\cup \BB$
  is adjacent to $B_0$. Thus, 	$\AA\cup \BB \cup \{B_0\}$   forms a $K_k$-minor in $X(D)$.

 Next  suppose that  $U$ is independent in $F$. For each $a_{l}\in U$, if  in $D$ there is an arc  $a_{l}a_{l}'$
other than $a_{l}u$ or  $a_{l}v$, let $Q_{l} :=\{a_{l}a_{l}'\}$.  Otherwise, we have $A_{D}(a_{l})=\{a_{l}u, a_{l}v\}$.
Let $\bar{a_{l}}$ be an in-neighbour other than $u,v$ of $a_{l}$ in $F$. Then $A_{F}(\bar{a_{l}}, a_{l})= \bar{a_{l}}a_{l}$.
Let $\bar{\bar{a_{l}}} \ne  a_{l}$ be an out-neighbour of  $\bar{a_{l}}$ in $F$.    Let $Q_{l} :=\{a_{l}u, a_{l}v,  \bar{a_{l}}\bar{\bar{a_{l}}}\}$.
Let $a_{l}, a_{m}$ be distinct vertices in $U$ such that $w_1 \in \{a_{l}, a_{m}\}$  when $t=1$   and  $|Q_{l}\cup Q_{m}|$ is minimised. 
   Let $B_0:=\{uv\} \cup Q_{l}\cup Q_{m}$. Note that in $X(D)$
each of the subgraphs induced on  	$Q_{l}$ and  $Q_{m}$  is connected  and  adjacent to  $uv$,  $B_0$ induces a connected subgraph.

    Note that for each $p\in \{l,m\}$,    $|Q_{p}  \cap \Sigma| \le 2$  and  $|Q_{p}  \cap \Pi| \le 2$.   If $|Q_{p}  \cap \Sigma|= 2$,
		then  $Q_{p} :=\{a_{p}u, a_{p}v,  \bar{a_{p}}\bar{\bar{a_{p}}}\}$ and  $\bar{a_{p}}\bar{\bar{a_{p}}} \in \Sigma$ and
			$\bar{a_{p}}$ is adjacent to $u$ (but not $v$ because $U$ is independent) in $F$. 
			Thus  $\bar{a_{p}}a_{p}$ is  $A$-feasible  ($A$-compatible) if  $\bar{a_{p}}\bar{\bar{a_{p}}}$ is  $A$-feasible  ($A$-compatible).
			Let $\Sigma'$  be obtained from $\Sigma$  by replacing $\bar{a_{p}}\bar{\bar{a_{p}}}$ with $\bar{a_{p}}a_{p}$.  Then  $|Q_{p}  \cap \Sigma'| \le 1$   and  $|B_{0}  \cap \Sigma'| \le 2$. In addition, each element in $\Sigma'$  is $A$-feasible or $A$-compatible, and no two share a tail. 
    Similarly, we can obtain $\Pi'$ such that  each of its elements    is $A$-feasible or $A$-compatible,   no two elements share a tail and  $|B_{0}  \cap \Pi'| \le 2$.
  
  Let $A^f$ ($A^c$, respectively)  be the set of
		$A$-feasible  ($A$-compatible)    arcs in  $\Sigma'  -B_0$;  and let  $B^f$ ($B^c$, respectively)  be the set of
		$B$-feasible  ($B$-compatible)    arcs in    $\Pi' -  B_0$. 
	Then, $|A^f|+|A^c|  \ge r-2\ge k-2-2\ge   i-1$. 
  Also,    $|B^f|+|B^c|  =  |\Pi' - B_0|\ge s-2\ge k-4\ge   j-1$.
  When $i=2$, since $|A^f|+|A^c|\ge k-4\ge 3$, we have   $A^c\ne\emptyset$.  Analogously,  we have that 
   $B^c\ne\emptyset$  when $j=2$.
   By Lemma \ref{le2}(2)-(6),  there exist  an  $(A,A^f, A^c)$-net  $\AA$  of size   $i$ and  a  $(B,B^f, B^c)$-net  $\BB$   of size  $j$.

  Since each arc outgoing from $u$ or $v$ is adjacent to an arc in  $Q_{l}$ or    $Q_{m}$, each branch set of 	$\AA\cup \BB$
  is adjacent to $B_0$. Thus, 	$\AA\cup \BB \cup \{B_0\}$   forms a $K_k$-minor in $X(D)$.

\medskip
   {\bf Case 2.2.4.}    $U \cap N_{F}^{-}(v)= \emptyset$  and $|U|\le1$ (hence $t\le1$):
   That is, $u$ and $v$ share at most one neighbour $a_1$ in $F$. If $a_1$ exists, the arc 
   between $a_1$ and $v$ in $F$ is $va_1$. 
  Let $A:= A_{F}(u)-\{uv\}$  and  $B:= A_{F}(v)$.

	Since $\delta (F)\ge k-1$ and $j \le k-3$,  
	$v$ has at least  $k-1-j \ge 2$ in-neighbours in $F$.  Say,  $N_F^-(v)=\{u,  y_{j+1}, y_{j+2}, \ldots,
	 y_{k-2}\}$.    Note that  $N_F^-(v)-\{u\} \ne \emptyset$. Recall that  $N_F^+(v)=\{v_{1}, v_{2}, \ldots,
	 v_{j}\}$.

			Let $\bar{H}$ be obtained from $H$ by deleting vertices  in $U \cup \{u, v\}$.    By Lemma \ref{le7}(b),  
   $\bar{H}$ is connected.  
Let $P_0:=(z_1,z_2,\ldots, z_m)$ be a shortest path
  in $\bar{H}$   between $N_F(u)-(\{v\} \cup U)$
and $N_F^-(v)-(\{u\}\cup U)$, where $m\ge2$ (because $u$ and $v$ share no common neighbour in  $\bar{H}$), 
 $z_1\in  N_F(u)-(\{v\}\cup U)$ and  $z_m\in N_F^-(v)-(\{u\}\cup U)$. 
Then
 each internal vertex  of $P_0$
is not  adjacent to $u$   in $F$. 
			
		\delete{	Let $\bar{H}:= H- \{u,v\}$.    By Lemma \ref{le7}(b),  
   $\bar{H}$ is connected.  
Let $P_0:=(z_1,z_2,\ldots, z_l)$ be a shortest path
  in $\bar{H}$   between $N_F(u)-\{v\}$
and $N_F^-(v)-\{u\}$, where $l\ge1$,  $z_1\in  N_F(u)-\{v\}$ and  $z_l\in N_F^-(v)-\{u\}$. 
Then, 
 each internal vertex $z_{l'}$  (that is,  $l'\ne 1$ or $l$) of $P$
is not  adjacent to $u$   in $F$.   
 } 

If  $|V(P_0)\cap  N_F(v)|=1$,  then     $z_m$ is the only
neighbour of $v$   in $F$ which is   on $P_0$.  Let $P:=P_0$ and set $z_l:=z_m$. 
 If  $|V(P_0)\cap  N_F(v)|\ge 2$, 
let $P= (z_1,z_2,\ldots, z_l)$ be the subpath  of  $P_0$  such that  $z_l\in N_F(v)$  and   $|V(P)\cap  N_F(v)|= 2$ .

 We shall  construct  a   branch set  $P'$    consisting of  arcs    alongside $P$.  
Let $z_0=u$ and $z_{l+1}=v$.  

For    $1\le g\le l$,  we associate to   $z_{g}$  the set $Q_{g}$  of  arcs  as 
follows.  If $A_{D}(z_{g})-  (A_{D}\{z_{g-1}, z_{g}\} \cup  A_{D}\{z_{g}, z_{g+1}\})\ne \emptyset$,
   then let  $Q_{g}$  be a singleton set that  contains exactly one  arc,  say, 
 $z_{g}z_{g}'  \in A_{D}(z_{g})-  (A_{D}\{z_{g-1}, z_{g}\} \cup  A_{D}\{z_{g}, z_{g+1}\})$.
Otherwise, $A_{D}(z_{g})-  (A_{D}\{z_{g-1}, z_{g}\} \cup  A_{D}\{z_{g}, z_{g+1}\})= \emptyset$.  
Since  the arc  $A_{F}\{z_{g}, z_{g+1}\}\in A(F)$  is not redundant,   
  $z_{g}z_{g-1} \in A_{D}(z_{g})$.   
	Similarly, 
	  $z_{g}z_{g+1} \in A_{D}(z_{g})$ since  $A_{F}\{z_{g-1}, z_{g}\}\in A(F)$   is not redundant. 
 Let  $\bar{z}_{g}$  be an in-neighbour of $z_{g}$ in $F$. Then  $\bar{z}_{g}z_{g}\in A(F)$. 
 Let  $\bar{z}_{g}\bar{\bar{{z}}}_{g}$  with   $\bar{\bar{{z}}}_{g}  \ne z_{g}$  be an arc outgoing from  $\bar{z}_{g}$ in $D$
 (which  exists because $\bar{z}_{g}z_{g}$  is not redundant). 
 Set  $Q_{g}:=\{z_{g}z_{g-1}, z_{g}z_{g+1}, \bar{z}_{g}\bar{\bar{{z}}}_{g}\}$.
 Note that    $Q_{g}$  induces a connected subgraph in $X(D)$ since  $\bar{z}_{g}\bar{\bar{{z}}}_{g}$ 
  is adjacent to  both $z_{g}z_{g-1}$ 
and $z_{g}z_{g+1}$.

In the case where $V(P)\cap  N_F(v)=\{z_p,z_l\}$ ($p<l$)   and $Q_p= \{z_pv\}$,  we slightly modify  $Q_p$
as $\{z_pv,\g\}$,  where $\g\in A_{D}(z_p)-\{z_pv\}$ (which  exists because $A_{F}(z_p, v)$  is not redundant).

Let $P':= \cup_{g=1}^lQ_{g}$. 
Then,  for    $1\le  g  \le l-1$,  since   $Q_{g}$    contains an arc
  outgoing from $z_{g}$ other than $z_{g}z_{g+1}$ and   $Q_{g+1}$  contains an arc
  outgoing from $z_{g+1}$ other than $z_{g+1}z_{g}$,  each  $Q_{g}$ is adjacent to  $Q_{g+1}$ in $X(D)$.
  Thus,
 $P'$ induces a connected 
subgraph in $X(D)$.  We call $P'$   a {\em parallel set} of $P$.

Let $\Sigma$  and $\Pi$ be as above.  We have the following claim:

\medskip
\medskip

{\bf Claim  2.}   (a) There is a set $\Sigma'$  such that  $|\Sigma'|\ge |\Sigma|-1$  and $P' \cap \Sigma'=\emptyset$, and 
    each  element  of which  is $A$-feasible or $A$-compatible and no two elements share a tail;\\
  (b)  There is a set $\Pi'$  such that  $|\Pi'|\ge |\Pi|-2$  and $P' \cap \Pi'=\emptyset$, and 
    each  element  of which  is $B$-feasible or $B$-compatible and no two elements share a tail.

\medskip
\medskip

{\em Proof.}   (a)
Initially, set $\Sigma':= \Sigma-P'$. Clearly, all properties except  $|\Sigma'|\ge |\Sigma|-1$ in (a) are satisfied.
 If $|P' \cap \Sigma|\le  1$, then we are done. Suppose that $|P' \cap \Sigma|\ge 2$. 
 Since $P_0$ is  a shortest path  in $\bar{H}$   between $N_F(u)-(\{v\}\cup U)$
and $N_F^-(v)-(\{u\}\cup U)$, each vertex  $z_{g}$ on $P$ with $g\ge 3$ is not adjacent to a  vertex of  $N_F(u)-(\{v\}\cup U)$.
Thus,   $Q_{g} \cap \Sigma =\emptyset$ for each $g\ge 3$. 
We now consider $g=2$.  Since $z_{2}$ is not
 adjacent to $u$ in $\bar{H}$, we have $|Q_{2} \cap \Sigma| \le 1$ and if  $|Q_{2} \cap \Sigma| = 1$ then
$|Q_{2}|  = 3$ and  $Q_{2}:=\{z_{2}z_{1}$, $z_{2}z_{3}, \bar{z}_{2}\bar{\bar{z}}_{2}\}$,
  where $\bar{z}_{2}$  is an in-neighbour of $z_{2}$ in $F$. 
Since  $z_{2}$ is not adjacent to $u$, $Q_{2} \cap \Sigma= \{\bar{z}_{2}\bar{\bar{z}}_{2}\}$,
	which means that $\bar{z}_{2}$ is adjacent to $u$ in $F$  and $\varphi(u, \bar{z}_{2}) =\bar{z}_{2}\bar{\bar{z}}_{2}$.
	In this case,  update  
	$\Sigma':= \Sigma' \cup\{\bar{z}_{2}z_2\}$.  Note that  $\bar{z}_{2}z_2$ is  $A$-feasible or $A$-compatible.

	If $|Q_{1} \cap \Sigma| \le 1$,   then 	$\Sigma'$  is the  desired  set.
	Suppose that $|Q_{1} \cap \Sigma| =2$. 
 Let  $Q_{1}:=\{z_{1}u$, $z_{1}z_{2}, \bar{z}_{1}\bar{\bar{z}}_{1}\}$,
  where $\bar{z}_{1}$  is an in-neighbour of $z_{1}$ in $F$. 
	Then, $Q_{1} \cap \Sigma= \{z_{1}z_{2},   \bar{z}_{1}\bar{\bar{z}}_{1}\}$,
	which means  $\varphi(u, z_{1})=   z_{1}z_{2}$ and  $\varphi(u, \bar{z}_{1})=    \bar{z}_{1}\bar{\bar{z}}_{1}$.
	Note that  $\bar{z}_{1}z_1$ is  $A$-feasible or $A$-compatible. By adding  $\bar{z}_{1}z_1$  into	$\Sigma'$, 
	we get that $|Q_1 \cap \Sigma'|\le  1$. Then  $|\Sigma'|\ge |\Sigma|-1$, as desired.

	(b)
	   Initially, set $\Pi':= \Pi-P'$. 
Recall that $P$ contains at most two neighbours, $z_{g_1}$ and  $z_{g_2}$ say,  of $v$.  
Let $\g$ be an    arc  in $\Pi \cap P'$ 
such that there is a 
$Q_g$ containing $\g$ (there may be more than one $Q_g$ containing $\g$) and  
$g\notin \{g_1, g_2\}$. 
Since  $z_{g}$ is not
 adjacent to $v$ in $\bar{H}$, we have  $|Q_g|=3$  and
	 $Q_{g}=\{z_{g}z_{g-1}, z_{g}z_{g+1}, \bar{z}_{g}\bar{\bar{{z}}}_{g}\}$, 
	where 
	   $\bar{z}_{g}$  is an in-neighbour of $z_{g}$ in $F$  and  $\bar{z}_{g}\bar{\bar{{z}}}_{g} \ne  \bar{{z}}_{g}z_{g}$
		is an arc outgoing from  $\bar{z}_{g}$ in $D$. Further,  $\bar{z}_{g}$  is a neighbour of $v$ in $F$ 
  and $\varphi(v, \bar{z}_{g}) = \bar{z}_{g}\bar{\bar{{z}}}_{g}$.
	Note that $\bar{{z}}_{g}z_{g} \notin \Pi$ is $B$-feasible or $B$-compatible. Now  update   $\Pi'$ by  adding  $\bar{{z}}_{g}z_{g}$.
	 That is,   $\Pi':= \Pi' \cup \{\bar{{z}}_{g}z_{g}\}$. 
	By repeating this procedure  for all such $\g$,  we obtain a  $\Pi'$ with the same size as
	$\Pi - (Q_{g_1} \cup Q_{g_2})$.

	For each $g\in\{g_1,  g_2\}$, if  $|\Pi \cap Q_g|=2$,  we will add a $B$-feasible or $B$-compatible arc into $\Pi'$. 
	Then   $|\Pi'|\ge |\Pi|-2$, as desired. 
 Suppose that $|\Pi' \cap Q_g|=2$
	for some $g\in\{g_1,  g_2\}$.  Then  $Q_{g}=\{z_{g}z_{g-1}, z_{g}z_{g+1}, \bar{z}_{g}\bar{\bar{{z}}}_{g}\}$, 
	where  $\bar{z}_{g}$ is an in-neighbour of $z_{g}$ in $F$  and  $\bar{z}_{g}\bar{\bar{{z}}}_{g} \ne  \bar{{z}}_{g}z_{g}$
		is an arc outgoing from  $\bar{z}_{g}$ in $D$. And,  $\bar{z}_{g}$  is a neighbour of $v$ in $F$ 
  with $\varphi(v, \bar{z}_{g}) = \bar{z}_{g}\bar{\bar{{z}}}_{g}$. Note that $\bar{{z}}_{g}z_{g} \notin \Pi$ is $B$-feasible or $B$-compatible. 
	Set  $\Pi':= \Pi' \cup \{\bar{{z}}_{g}z_{g}\}$.  Then  $|\Pi'|\ge |\Pi|-2$.
	Consequently, we get the desired $\Pi'$.    \qed

	\delete{
		Then  $\bar{z}_{2}\bar{z}_{2}' \in \Sigma$, 
  $\bar{z}_{2}\in N_{F}(u)$ and  $\varphi (u, \bar{z}_{2})=  \bar{z}_{2}\bar{z}_{2}'$. 
  Note that  $\bar{z}_{2}z_2$ is outgoing from  $\bar{z}_{2}$ to $z_2$.  
	If   $\bar{z}_{2}z_2 \notin \Pi$,  update $\Sigma$
	by resetting   $\varphi (u, \bar{z}_{2}):= \bar{z}_{2}z_2$, and denote the resultant set again by $\Sigma$.
	If   $\bar{z}_{2}z_2 \in \Pi$,   then  $\bar{z}_{2}$ is adjacent to $v$   and    $\varphi (v, \bar{z}_{2}) = \bar{z}_{2}z_2$.  
	Update  $\Pi$  by resetting    $\varphi (v, \bar{z}_{2}):= \bar{z}_{2}\bar{z}_{2}'$,  and denote it again by $\Pi$. 
	
	$\Sigma$
	by resetting   $\varphi (u, \bar{z}_{2}):= \bar{z}_{2}z_2$, and denote the resultant set again by $\Sigma$.

	Then,   $P(z_{2}) \cap \Sigma =\emptyset$.
Note that  $|P(z_{1}) \cap \Sigma| \le 1$. Therefore,  $|P' \cap \Sigma|\le1$.

  Similarly, we can get  $\Pi$,  $P'$   with $|P' \cap \Pi|\le |V(P)\cap N_F(v)|$. 

,  $\bar{z}_{2}\notin N_{F}(v)$ (since $P$ is non-trivial)  

 and  
$|P' \cap \Pi|\le  |V(P)\cap N_F(v)|$ (note that  $|V(P)\cap N_F(v)|=1$ or $2$). 
,    $\Pi$ and $P'$ 
}

 Let $B_0:=\{uv\} \cup P'$.   Then  $B_0$ induces a connected subgraph in $X(D)$ since 
$uv$ is adjacent to $Q_{1}$.

 Next we show that there exists  a  net  of size $i$  at  $u$  and a  net  of size $j$ at   $v$
such that none of their  branch sets  intersects  $B_0$.  

  If  $j=2$ (hence $i = k-3$),   then at least one arc, say   $\g$, in  $\Pi'-\Sigma'$
is  $B$-compatible (since there are more incoming arcs at $v$).  Let   $B^c:=\{\g\}$.  
Since $|\Pi'-\Sigma'| \ge s-2-1\ge k-5 \ge j=2$, 
	by Lemma \ref{le2}(2),  there  exists a $(B, \emptyset, \B^c)$-net $\BB$ of size $j=2$. 
	Similarly,  	let $A^f$ ($A^c$, respectively)  be the set of
		$A$-feasible  ($A$-compatible, respectively)    arcs in  $\Sigma'$.
	Note that   $|\Sigma'|\ge r-1 \ge k-3=i\ge 4$.  
  By Lemma \ref{le2}(6), there exists an  $(A,A^f, A^c)$-net  $\AA$  of size   $i$.

  Suppose that $3\le j \le  k-3$  (hence  $2 \le i \le k-4$).  
		Let $B^f$ (respectively, $B^c$)  be the set of
		$B$-feasible  ($B$-compatible)    arcs in $\Pi'$. 
	Since    $|\Pi'|\ge s-2\ge k-4 \ge j-1$ and $B^c\ne \emptyset$  when $j=3$,
	 by Lemma \ref{le2}(4) or (6),  there  exists a $(B, B^f, \B^c)$-net $\BB$ of   size $j$.	
		Let $A^f$ ($A^c$, respectively)  be the set of
		$A$-feasible  ($A$-compatible, respectively)    arcs in  $\Sigma'-   \Pi'$.   We now show that there exists  a  net  of size $i$  at  $u$.  	
			  If  $i\ge 3$,  
				then 
	$|\Sigma'- \Pi'|\ge r-1-1 \ge k-4\ge i \ge 3$.   By Lemma \ref{le2}(3) or (6), there exists an  $(A,A^f, A^c)$-net  $\AA$  of size   $i$. 
	 Suppose that $i=2$.
	 Note that $|\Sigma'- \Pi'|  \ge k-4\ge 3$ (because $k\ge 7$) and  
	 there are at least three incoming arcs at $u$ in $F$.
	  $\Sigma'-   \Pi'$	contains at least two  $A$-compatible arcs, say,  $\gamma_1$ and  $\gamma_2$.
		Let  $\AA:= \{\{uu_1, \gamma_1\},  \{uu_2, \gamma_2\}\}$. Then $\AA$  is a net  of size $2$  at  $u$.

Since  each element of  $\AA$ constructed above  contains an arc  $xx'$,  which is  outgoing from  a neighbour $x\ne v$  of $u$ and  $x'\ne u$, 
 each element of  $\AA$  is adjacent to   $B_0$ because  $uv\in B_0$ is adjacent to each    $xx'$.    
 Note that  $|V(P)\cap N_F(v)| \in \{1, 2\}$. In the case when $|V(P)\cap N_F(v)|=1$,  $P'$ contains an arc  $yy'$,  which is outgoing from an in-neighbour $y\ne u$ of   $v$ and $y'\ne v$.  Since such a $yy'$ is adjacent to every arc of $A_F(v)$,  it is adjacent to every element of 
  $\BB$ constructed above.  In the case when $|V(P)\cap N_F(v)|=2$,   $P'$ contains two arcs  $\a$ and $\b$, each of them
 is outgoing from a neighbour  of $v$ other than $u$ and heading to  a vertex other than $v$. Then each arc of $A_F(v)$ is 
adjacent to either $\a$ or $\b$. So  every element of   $\BB$ is adjacent to $P'\subseteq B_0$.
Therefore,  $\{B_0\}\cup \AA \cup \BB$  
 forms   a $K_{k}$-minor in $X(D)$.

\delete{
%%%%%%%%%%%%%%%%%%%%%%%%%%%%%%%%%%%%%%%%%%%%%%%%%%%%%%%%%%%%
   {\em Case (b).}    $k=6$.  Then, $i,j \in \{2,3\}$.  There are two possibilities to consider:   (i)  $i=3$ and $j=2$; 
 and,   (ii)$i=2$ and $j=3$.

 Suppose first that   $i=3$ and $j=2$.  Again as in  Case (a),  
		  Let $\bar{H}:= H- \{u,v\}$.   
Let $P_0:=(z_1,z_2,\ldots, z_l)$ be a shortest path
  in $\bar{H}$   between $N_F(u)-\{v\}$
and $N_F^-(v)-\{u\}$, where $l\ge1$,  $z_1\in  N_F(u)-\{v\}$ and  $z_l\in N_F^-(v)-\{u\}$. 
 Similarly,  if  $|V(P_0)\cap  N_F(v)|=1$,  let $P:=P_0$.   And if  $|V(P_0)\cap  N_F(v)|\ge 2$, 
let $P$ be the segment  of  $P_0$ with
 least possible length  such that $P$ starts at a vertex of $N_F(u)-\{v\}$ and terminates at a vertex of  
  $N_F(v)$  and  
$|V(P)\cap  N_F(v)|= 2$. Clearly, $|V(P)\cap  N_F(u)|= 1$. By an argument   analogous to that of   Case (a),
we can get a $u$-net of size $3$  using arcs of $A_F(u)-\{uv\}$ and $\Sigma -P'$, a $v$-net of size $2$  using arcs of $A_F(v)$ and
 $\Pi -P'$,  and an additional    branch set $B_0:=\{uv\}\cup P'$,  where $P'$ is  a parallel set  of $P$  such that $|P' \cap \Sigma|\le1$ and  
$|P' \cap \Pi|\le  |V(P)\cap N_F(v)|$.  The union of them forms a  $K_{k}$-minor in $X(D)$.

 Suppose  next that   $i=2$ and $j=3$.  Recall that $A_F^+(u)=\{v, u_1, u_2\}$ and   $A_F^+(v)=\{v_1, v_2, v_3\}$.  
Note that it may happen that  $A_F^+(u) \cap  A_F^+(v) \ne \emptyset$.
  There  exist (at least) 
two in-neighbours, say, $x_3$, $x_4$, of $u$ in $F$.  Since $j=3$, there exists one in-neighbour, say, $y_4$, of $v$ 
other than $u$ in  $F$. 
	
  Let $\bar{H}:= H- \{v,  y_4\}$.    Then   $\bar{H}$ is connected. Let $P_1$,  $P_2$  and  $P_3$ be a shortest path 
	in $\bar{H}$  joining $u$ and $v_1$,  $v_2$ and $v_3$, respectively.  Then  $|V(P_q)\cap (N_F(u) -\{v\})|=1$     
	for each $q\in \{1,2,3\}$. 
	There are  two possibilities to consider: 
	 (i)  $|V(P_q)\cap \{v_1, v_2, v_3\}|=1$  for each $q\in \{1,2,3\}$; and,  (ii) $|V(P_q)\cap \{v_1, v_2, v_3\}|\ge 2$ for 
	  one $q\in \{1,2,3\}$. Suppose first that  (i) is true.  Then,  $V(P_q)\cap \{v_1, v_2, v_3\}=\{v_q\}$,  where $q\in \{1,2,3\}$.
Moreover,  one, say $x_4$,  of  $x_3$ and  $x_4$  is  on at most one of the three paths  $P_1$,  $P_2$  and  $P_3$;
that is,  two of these paths  are avoiding $x_4$.  Without loss of generality assume that $x_4 \notin V(P_1)$ and  $x_4 \notin V(P_2)$.
 Let  $P_1'$ be a parallel path of $P_1-\{u\}$  such that $|P_1' \cap \Sigma|\le1$ and  $|P_1' \cap \Pi|\le1$, and 
 $P_2'$ be a parallel path of $P_2-\{u\}$  such that $|P_2' \cap \Sigma|\le1$ and  $|P_2' \cap \Pi|\le1$. 
Note that one of $u_1$,  $u_2$ and  $x_3$ is    on   neither  $P_1$ nor   $P_2$.  If 
$x_3 \notin V(P_1)\cup V(P_2)$,
let 
$\BB(u, \{uu_1,uu_2\})$ be the $u$-net containing the branch sets $\{uu_1, \varphi(u, x_3)\}$ and  $\{uu_2, \varphi(u, x_4)\}$. 
If 
$u_1 \notin V(P_1)\cup V(P_2)$,
let 
$\BB(u, \{uu_1,uu_2\})$ be the $u$-net containing the branch sets $\{uu_1, \varphi(u, x_4)\}$ and  $\{uu_2, \varphi(u, u_1)\}$. 
 If 
$u_2 \notin V(P_1)\cup V(P_2)$,
let 
$\BB(u, \{uu_1,uu_2\})$ be the $u$-net containing the  branch sets $\{uu_2, \varphi(u, x_4)\}$ and  $\{uu_1, \varphi(u, u_2)\}$. 
Let $\BB(v, A_F(v))$ be a $v$-net containing branch sets $\{vv_1\}$,  $\{vv_2, \varphi(v, v_3)\}$  and  $\{vv_3, \varphi(v, y_4)\}$. 
Let  $B_0:=\{uv\}\cup P_1'\cup P_2'$.  Then $B_0$ is adjacent to  each element of $\BB(u, \{uu_1,uu_2\})$ since $uv\in B_0$.  
In addition,  $B_0$ is adjacent to  each element of $\BB(v, A_F(v))$  since  $B_0$ contains
an arc   other than $v_qv$ outgoing  from $v_q$  for each $q\in \{1,2\}$. 
Thus,   $\BB(u, \{uu_1,uu_2\})\cup \BB(v,A_{F}(v)) \cup \{B_0\}$  forms a  $K_{6}$-minor in $X(D)$.

Suppose now that  (ii) is true.  That is,  $P_q$ is passing at least two vertices of   $\{v_1, v_2, v_3\}$.  Let $P$ be the segment with
 least  length of  $P_q$   starting from a vertex of $N_F(u)-\{v\}$ and terminating at a vertex of     $\{v_1, v_2, v_3\}$ and such that 
$|V(P)\cap   \{v_1, v_2, v_3\}|=2$. Without loss of generality assume that $V(P)\cap   \{v_1, v_2, v_3\}=\{v_1, v_2\}$. 
Let  $P'$ be a parallel path of $P$  such that $|P' \cap \Sigma|\le1$ and  
$\varphi(v,v_3)\notin P'$ and $\varphi(v,y_4)\notin P'$.   Let 
$\BB(v, A_F(v))$ be a $v$-net as follows:  $\{vv_1\}$,  $\{vv_2, \varphi(v, v_3)\}$  and  $\{vv_3, \varphi(v, y_4)\}$. 
Let  $B_0:=\{uv\}\cup P'$. Then as above $B_0$ is adjacent to   each element of $\BB(v, A_F(v))$. 
If the starting end    of $P$ is in $\{u_1,u_2\}$, say $u_2$, then let    $\BB(u, \{uu_1,uu_2\})$ be the  $u$-net  containing 
branch sets $\{uu_1, \varphi(u, x_4)\}$ and  $\{uu_2, \varphi(u, u_1)\}$.  If the starting end of $P$ is in $\{x_3, x_4\}$, say $x_3$,   
then  again let    $\BB(u, \{uu_1,uu_2\})$ be the  $u$-net  containing 
branch sets $\{uu_1, \varphi(u, x_4)\}$ and  $\{uu_2, \varphi(u, u_1)\}$.  In both cases  $B_0$ is adjacent to   each element of
  $\BB(u, \{uu_1,uu_2\})$       since $uv\in B_0$.  Hence,   $\BB(u, \{uu_1,uu_2\})\cup \BB(v,A_{F}(v)) \cup \{B_0\}$  forms a  $K_{6}$-minor in $X(D)$.
%%%%%%%%%%%%%%%%%%%%%%%%%%%%%%%%%%%%%%%%%%%%%%%%%%%%%%%%
}

 \medskip
   {\bf Case 2.3.}      $j=k-2$:  Then  $i=1$.
  Suppose first that     $d_{F}^-(v)=1$;  that is,  $uv$ is the only incoming arc at $v$ and  $d_{F}(v)=k-1$. 
	Since $v$ is not special,  one out-neighbour $v'$ of $v$ in  $F$  is not a sink.    
 Now  consider  the arc  $vv'$.  If $d^+_{F}(v')\ge 2$, then  $S_{F}(vv')= d_{F}^+(v) + d^+_{F}(v')-1\ge  k-2 +2-1=k-1$. 
 This is a special case of Case 2.2 and thus can be  treated similarly.  
    If   $d^+_{F}(v')=1$,  then by Property A,    one  potential  arc $v'v''$ ($\ne v'v$)  is  outgoing from $v'$ in $D$  but  
  not present  in   $F$ (since  $d_{F}^+(v)=1$).  Let $F'$ be obtained from $F$ by  adding     $v'v''$.
  Again we have $S_{F'}(vv')= d_{F'}^+(v) + d^+_{F'}(v')-1\ge  k-2 +2-1=k-1$, and this  can also be  treated similarly.
	Suppose  next that     $d_{F}^-(v)\ge 2$. Then  $t\le 1$. This case can be dealt with by a similar way as in    Cases 2.2.3  or   2.2.4.

  \delete{
   As in  Case 2.2,   a  net at $v'$ of size 2,  a net at 
	$v$ of size $k-3$ and an additional 
	branch set (from certain shortest path between $N_F(v)-\{v'\}$  and $N_F(v')-\{v\}$)  can be constructed and the union 
	of them forms  a  $K_{k}$-minor in $X(D)$.
}

	\delete{		
			Suppose  next that     $d_{F}^-(v)\ge 2$.  Say,   $N^-_F(v)=\{u, y_1, \ldots, y_l\}$, where $l\ge 1$.  Then, $d_{F}(v)\ge k$. 
Recall that $N_F^+(v)=\{v_1, v_2, \ldots, v_{k-2}\}$.

Note that $t\le 1$ in this case. By a similar way as in   Case 2.2.3  or  Case 2.2.4,   
we can obtain a  net at  $u$ of size $1$  using arcs of $\{uu_1\}$ and $\Sigma' -P'$, a  net at  $v$ of size $k-2$  using arcs of $A_F(v)$ and
 $\Pi' -P'$,  and an additional    branch set $B_0:=\{uv\}\cup P'$,  where $P'$ is  a parallel set  of $P$  such that $|P' \cap \Sigma'|\le1$ and  
$|P' \cap \Pi'|\le 2$.  The union of them forms a  $K_{k}$-minor in $X(D)$.
}

\medskip
   {\bf Case 2.4.}      $j=k-1$:   Then  $i=0$,   which implies    $d_{F}^+(u) =1$.   By Property A, there   exists a potential 
	arc $uz\ne uv$  in $D$. 
Then   $\AA:= \{ \{uz\}\}$    is a  $(\{uz\}, \emptyset, \emptyset)$-net.   
  	Let  $B:= A_{F}(v)$. Let  $B^f$ ($B^c$, respectively)  be the set of
		$B$-feasible  ($B$-compatible, respectively)    arcs in    $\Pi$. 
 	By Lemma  \ref{le2}(6),   a  $(B,B^f, B^c)$-net  $\BB$   of size  $j$ exists.
	  It is not hard to see that  $\AA \cup \BB$  forms   a  $K_{k}$-minor in $X(D)$.

 This completes the proof of Theorem 1. \qed

\vskip 1pc 
%{\bf Acknowledgements}~~  
 %Guangjun Xu was supported by MIFRS and SFS scholarships of The University 
%of Melbourne. 
% Sanming Zhou was supported by an ARC Discovery Project Grant (DP0558677) of
% the Australian Research Council.

\small

\end{document}